\documentclass[12pt]{article}
\usepackage{graphicx}
\usepackage{amsmath}
\usepackage{amssymb}
\usepackage{theorem}
\usepackage{enumerate} 
\usepackage{color}

  \usepackage{hyperref}

\sloppy
\pagestyle{plain}

\numberwithin{equation}{section}

 \textheight=8.5in
\textwidth=6.0in
\addtolength{\oddsidemargin}{-.25in}

\newtheorem{thm}{Theorem}[section]
\newtheorem{lemma}[thm]{Lemma}

\newtheorem{prop}[thm]{Proposition}
\newtheorem{cor}[thm]{Corollary}
{\theorembodyfont{\rmfamily}
\newtheorem{defn}[thm]{Definition}

\newtheorem{rmk}[thm]{Remark}
}

\newcommand{\qed}{\hfill \mbox{\raggedright \rule{.07in}{.1in}}}
 
\newenvironment{proof}{\vspace{1ex}\noindent{\bf
Proof}\hspace{0.5em}}{\hfill\qed\vspace{1ex}}
\newenvironment{pfof}[1]{\vspace{1ex}\noindent{\bf Proof of
#1}\hspace{0.5em}}{\hfill\qed\vspace{1ex}}

\newcommand{\cP}{{\mathbb P}}
\newcommand{\R}{{\mathbb R}}
\newcommand{\C}{{\mathbb C}}
\newcommand{\Z}{{\mathbb Z}}
\newcommand{\N}{{\mathbb N}}
\newcommand{\E}{{\mathbb E}}
\newcommand{\cB}{{\mathcal B}}
\newcommand{\cF}{{\mathcal F}}

\newcommand{\bbS}{{{\mathbb S}^{d-1}}}

\newcommand{\Var}{\operatorname{Var}}
\newcommand{\BV}{{\rm BV}}
\newcommand{\sgn}{\operatorname{sgn}}

\newcommand{\infk}{{\SMALL \inf_k}}
\newcommand{\supk}{{\SMALL \sup_k}}
\newcommand{\sumk}{{\SMALL \sum_k}}

\newcommand{\supI}{{\SMALL \sup_I}}
\newcommand{\sumI}{{\SMALL \sum_I}}

\newcommand{\eps}{{\epsilon}}

\newcommand{\SMALL}{\textstyle}

\title{Analytic proof of multivariate stable local large deviations and application to deterministic dynamical systems}

\author{
Ian Melbourne \thanks{Mathematics Institute, University of Warwick, Coventry, CV4 7AL, UK}
\and
Dalia Terhesiu
\thanks{Mathematisch Instituut,
University of Leiden, Niels Bohrweg 1, 2333 CA Leiden, Netherlands}
}

\date{5 September 2020. Updated 13 February 2022.}

\begin{document}

 \maketitle

\begin{abstract}
We give a short analytic proof of local large deviations for i.i.d.\ random variables in the domain of a multivariate $\alpha$-stable law, $\alpha\in(0,1)\cup(1,2]$.  Our method simultaneously covers lattice and nonlattice distributions (and mixtures thereof), bypassing aperiodicity considerations.  The proof applies also to the dynamical setting.

\end{abstract}

\section{Introduction}
\label{sec-intro}

Local large deviation results for i.i.d.\ random variables in the domain of a stable law have been recently obtained by Caravenna and Doney~\cite[Theorem 1.1]{CaravennaDoney} and refined by Berger~\cite[Theorem 2.3]{Berger19}.
We refer to such results as stable local large deviations (stable LLD).

The aim of this paper is three-fold.  First,
we provide a new proof of the stable LLD in Theorem~\ref{thm-lld1d}
(we exclude the case $\alpha=1$ but include the case $\alpha=2$ which was previously omitted).
Second, in Theorem~\ref{thm-lldgen}, we generalise to the multivariate case which for the main part had also been previously omitted.
Our methods bypass aperiodicity considerations and cover lattice and nonlattice distributions simultaneously.
Instead of using Fuk-Nagaev inequalities as was done in~\cite{Berger19,CaravennaDoney}, we give a short analytic proof using Nagaev-type perturbative arguments together with decay of Fourier coefficients.
A major advantage of this approach is that it generalises naturally to the dynamical setting.  This is the third main aim of this paper where in Theorem~\ref{thm-lldGM} we establish the stable LLD for sequences of nonindependent random variables arising from observables of deterministic dynamical systems.

We begin by recalling the scalar i.i.d.\ set up in~\cite{Berger19,CaravennaDoney}.
Let
$X$ be a random variable with $\E X^2=\infty$.  We suppose that 
\begin{equation}\label{eq-rvbt}
\cP(X>x)=(p+o(1))\ell(x) x^{-\alpha},\quad \cP(X \le -x)=(q+o(1))\ell(x) x^{-\alpha}
\end{equation} 
as $x\to\infty$,
where $\alpha\in(0,2]$, $\ell:[0,\infty)\to(0,\infty)$ is slowly varying, and $p,q\ge0$ with $p+q>0$.
Equivalently, $X$ is in the domain of an $\alpha$-stable law $Y_\alpha$ (determined by $\alpha$, $p$, $q$).
Namely, there are sequences $a_n>0$, $b_n\in\R$, such that
\[
\tfrac{1}{a_n}(S_n-b_n)\to_d Y_\alpha.
\]
where $S_n=X_1+\dots +X_n$ and the $X_i$ are independent copies of $X$.

Without loss of generality, we may suppose that $\ell$ is continuous.
Set $\tilde\ell=\ell$ for $\alpha\in(0,2)$ and
$\tilde\ell(x)=1+\int_1^{1+x}\frac{\ell(u)}{u}\, du$ for $\alpha=2$.
Then $a_n$ satisfies 
\[
\lim_{n\to\infty}\frac{n\tilde\ell(a_n)}{a_n^{\alpha}}=1.
\]
Also,
\[
b_n= \begin{cases}  0 & \alpha\in(0,1) \\  
n\E (X 1_{\{|X|\le a_n\}}) & \alpha=1 \\
n\E X & \alpha\in(1,2]
\end{cases}.
\]

Stable local large deviations (the main topic of this paper) concerns estimates for $\cP(S_n\in J)$ for subsets $J\subset \R$ taking into account the location of $J$:

\begin{thm} \label{thm-lld1d} 
Assume~\eqref{eq-rvbt} with $\alpha\in (0,2]$.  
Then for every $h>0$ there is a constant $C>0$ such that 
\begin{align} \label{eq-lld1d}
\cP ( S_n-b_n \in (x- h, x+h] ) \le C\frac{n}{a_n} \,\frac{\tilde\ell(|x|)}{1+|x|^\alpha}
\quad\text{for all $n\ge1$, $x\in\R$.}
\end{align}

In particular, in the lattice case where $X$ is supported on $\Z$,
there is a constant $C>0$ such that
\[
\cP(S_n-[b_n]=N)\le C\frac{n}{a_n} \,\frac{\tilde\ell(|N|)}{1+|N|^\alpha}
\quad\text{for all $n\ge1$, $N\in\Z$.}
\]
\end{thm}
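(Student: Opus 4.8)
The plan is to establish the (apparently stronger) estimate
\[
\E\big[\phi\big((S_n-b_n)-x\big)\big]\ \le\ C\,\frac{n}{a_n}\,\frac{\tilde\ell(|x|)}{1+|x|^\alpha}\qquad(n\ge1,\ x\in\R)
\]
for a suitable Fej\'er-type kernel $\phi\ge 1_{\{|u|\le h\}}$ with $\phi\ge0$ and $\hat\phi$ smooth, nonnegative and supported in a small interval $(-\delta,\delta)$; this gives \eqref{eq-lld1d}, and the lattice statement follows by taking $h=1$ and using $\{S_n-[b_n]=N\}\subset\{S_n-b_n\in(N-1,N+1]\}$. Since $\alpha\neq1$ we may recentre: put $\bar X_i=X_i-b_n/n$ (so $\bar X_i=X_i$ for $\alpha\in(0,1)$ and $\bar X_i=X_i-\E X$ for $\alpha\in(1,2]$; in either case $\bar X_i$ does not depend on $n$, and $\E\bar X=0$ when $\alpha>1$), $\bar S_n=S_n-b_n$, and $\bar\varphi(t)=\E[e^{it\bar X}]$. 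Fourier inversion yields $\E[\phi(\bar S_n-x)]=\tfrac1{2\pi}\int_{-\delta}^{\delta}\hat\phi(t)\,e^{-itx}\,\bar\varphi(t)^n\,dt$. Fix $\delta>0$ small enough that $|\bar\varphi(t)|<1$ for $0<|t|\le\delta$; this is the only place where the type of the distribution enters, and such $\delta$ exists equally in the lattice, nonlattice and mixed cases (for a lattice distribution $\delta$ must lie below $2\pi$ divided by the span), so no aperiodicity hypothesis is needed. The classical domain-of-attraction expansion gives $|\bar\varphi(t)|\le 1-c|t|^\alpha\tilde\ell(1/|t|)$ for small $t$, whence, after $t\mapsto a_n^{-1}s$ and $n\tilde\ell(a_n)a_n^{-\alpha}\to1$, one gets $\int_{-\delta}^{\delta}|\bar\varphi(t)|^n\,dt=O(a_n^{-1})$.

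For the bulk range $|x|\le 2a_n$ I would bound $\E[\phi(\bar S_n-x)]\le\tfrac1{2\pi}\int_{-\delta}^{\delta}|\hat\phi|\,|\bar\varphi|^n\,dt=O(a_n^{-1})$ and combine this with the elementary fact that $n\,\tilde\ell(|x|)/(1+|x|^\alpha)\gtrsim 1$ throughout $|x|\le 2a_n$ — a consequence of $n\tilde\ell(a_n)a_n^{-\alpha}\to1$ and Potter's bounds for $\tilde\ell$ — to conclude $\E[\phi(\bar S_n-x)]\lesssim a_n^{-1}\lesssim \tfrac{n}{a_n}\,\tilde\ell(|x|)/(1+|x|^\alpha)$ in this range.

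For the tail $x>2a_n$ (the range $x<-2a_n$ follows by applying the argument to $-\bar X_i$, which swaps $p,q$) I would truncate at $y=x/2$: write $\bar\varphi=\bar\varphi_y+\bar\theta_y$ with $\bar\varphi_y(t)=\E[e^{it\bar X}1_{\{|\bar X|\le y\}}]$ and $|\bar\theta_y(t)|\le\cP(|\bar X|>y)\lesssim\tilde\ell(y)y^{-\alpha}$, and expand $\bar\varphi^n=\bar\varphi_y^{\,n}+\sum_{k=1}^{n}\binom{n}{k}\bar\varphi_y^{\,n-k}\bar\theta_y^{\,k}$. Using $|\bar\varphi_y(t)|\le\cP(|\bar X|\le y)$ together with the near-$0$ expansion one checks $\int_{-\delta}^{\delta}|\bar\varphi_y(t)|^{\,n-k}\,dt=O(a_n^{-1})$ uniformly for $k\le n/2$ (the part $|t|\lesssim1/y$ contributes $O(y^{-1})=O(a_n^{-1})$ and the rest is as above), so the $k\ge1$ terms contribute at most $\tfrac{C}{a_n}\big(e^{n\cP(|\bar X|>y)}-1\big)\lesssim \tfrac{n}{a_n}\cP(|\bar X|>y)\asymp\tfrac{n}{a_n}\,\tilde\ell(x)x^{-\alpha}$, since $n\cP(|\bar X|>y)=O(1)$ and $\cP(|\bar X|>\cdot)$ is regularly varying. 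It remains to treat $T_0=\tfrac1{2\pi}\int_{-\delta}^{\delta}\hat\phi(t)e^{-itx}\bar\varphi_y(t)^n\,dt$. As $\bar\varphi_y\in C^\infty$, I would integrate by parts $m$ times, with $m$ the least integer $>\alpha$ (so $m\in\{1,2,3\}$), the boundary terms vanishing because $\hat\phi$ is smooth with compact support in $(-\delta,\delta)$; this gives $|T_0|\le(2\pi|x|^m)^{-1}\int_{-\delta}^{\delta}\big|\tfrac{d^m}{dt^m}(\hat\phi\,\bar\varphi_y^{\,n})\big|\,dt$. Each term in the expansion of this derivative is controlled by $n^r|\bar\varphi_y(t)|^{\,n-r}\prod_l|\bar\varphi_y^{(i_l)}(t)|$, with $r$ copies differentiated and $\sum_l i_l=m$, and is estimated via $|\bar\varphi_y^{(i)}(t)|\lesssim|t|^{\alpha-i}\tilde\ell(1/|t|)$ for $i<\alpha$ and $|\bar\varphi_y^{(i)}(t)|\le\E[|\bar X|^i1_{\{|\bar X|\le y\}}]\lesssim\tilde\ell(y)y^{\,i-\alpha}$ for $i\ge\alpha$. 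The dominant contribution is the term $n|\bar\varphi_y|^{\,n-1}|\bar\varphi_y^{(m)}|$, which gives $\lesssim|x|^{-m}\,n\,\tilde\ell(y)y^{m-\alpha}\,a_n^{-1}\asymp\tfrac{n}{a_n}\,\tilde\ell(x)x^{-\alpha}$, exactly the target; the other terms are lower order, as one checks using the substitution $t\mapsto a_n^{-1}s$, the relations $n\tilde\ell(a_n)a_n^{-\alpha}\to1$ and $y=x/2>a_n$, and Karamata's and Potter's theorems (the spare power $|x|^{-(m-\alpha)}$ absorbs the slowly varying factors since $m>\alpha$).

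I expect the last step to be the main obstacle: organising $\tfrac{d^m}{dt^m}(\hat\phi\,\bar\varphi_y^{\,n})$ and verifying that \emph{every} one of the finitely many terms it generates obeys the bound $\tfrac{n}{a_n}\tilde\ell(x)x^{-\alpha}$. This requires the sharp size estimates for the derivatives $\bar\varphi_y^{(i)}$, and here the recentring is essential: it forces $\bar\varphi'(0)=0$ and hence the gain $|\bar\varphi_y^{(i)}(t)|\lesssim|t|^{\alpha-i}\tilde\ell(1/|t|)$ for $i<\alpha$, whereas without it the term $n(n-1)\bar\varphi_y^{\,n-2}(\bar\varphi_y')^2$ would carry a factor $\sim n^2(\E X)^2$ that is far too large for $\alpha\in(1,2]$. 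Once these estimates are in place the remaining bookkeeping — balancing the powers of $n$, $a_n$ and $x$ and absorbing slowly varying factors — is routine.
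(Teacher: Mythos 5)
Your proposal is correct in outline but follows a genuinely different route from the paper. The paper deduces Theorem~\ref{thm-lld1d} from the multivariate Theorem~\ref{thm-lldgen} and never truncates $X$: after the same smoothing by a kernel with compactly supported Fourier transform and the same dichotomy between $|x|\ll a_n$ and $|x|\ge a_n$, it extracts the decay in $x$ by integrating by parts \emph{at most once} and then applying a finite-difference (``modulus of continuity'') trick --- replacing $s$ by $s-\pi x/|x|^2$ inside the Fourier integral --- together with the H\"older-type moduli of continuity of $\Psi$ and $\Psi'$ from Lemma~\ref{lemma-chfacts}. Those moduli of continuity are what supply the fractional power of $|x|$ that integration by parts alone cannot produce. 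You instead truncate at $y=x/2$ so that $\bar\varphi_y$ is smooth, dispose of the remainder by a binomial expansion in $\bar\theta_y$ (your bound $\int|\bar\varphi_y|^{n-k}\,dt=O(a_n^{-1})$ is stated only for $k\le n/2$; the terms $k>n/2$ need the separate, easy, observation that $\sum_{k>n/2}\binom{n}{k}\cP(|\bar X|>y)^k$ is super-exponentially small times $n\,\cP(|\bar X|>y)$), and integrate by parts $m=\lceil\alpha\rceil$ times (three times for $\alpha=2$). Both routes work for i.i.d.\ sums and both bypass aperiodicity for the same reason (the Fourier multiplier is supported near $0$); the paper's choice is driven by Section~\ref{sec-dyn}, where the finite-difference argument transfers directly to the twisted transfer operators $R(s)$, whereas truncating the observable would not interact cleanly with the spectral decomposition~\eqref{eq-sp}.

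One estimate you rely on is stated incorrectly, though the error is repairable. For $\alpha\in(1,2]$ you claim $|\bar\varphi_y'(t)|\ll|t|^{\alpha-1}\tilde\ell(1/|t|)$, justified by ``the recentring forces $\bar\varphi'(0)=0$ and hence the gain''. The recentring makes $\bar\varphi'(0)=0$, but the truncation reintroduces a mean: $\bar\varphi_y'(0)=-i\,\E\big[\bar X\,1_{\{|\bar X|>y\}}\big]$, which has modulus of order $y^{1-\alpha}\ell(y)\neq0$, so your bound fails for $|t|\ll 1/y$. The correct bound is
\[
|\bar\varphi_y'(t)|\ \ll\ \big(|t|\vee y^{-1}\big)^{\alpha-1}\,\tilde\ell\big(|t|^{-1}\wedge y\big),
\]
and the additional contribution from the region $|t|\le 1/y$ is harmless: for instance, in the worst term $n^2|x|^{-2}\int(\bar\varphi_y')^2|\bar\varphi_y|^{n-2}$ it yields at most a constant times $n^2y^{-1-2\alpha}\tilde\ell(y)^2$, which is $\ll \frac{n}{a_n}\tilde\ell(x)x^{-\alpha}$ because $y\ge a_n$, $n\sim a_n^\alpha/\tilde\ell(a_n)$ and Potter's bounds. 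With this correction (and the analogous checks for the mixed terms $\bar\varphi_y'\bar\varphi_y''$ and $(\bar\varphi_y')^3$ when $\alpha=2$), your bookkeeping closes and the argument is complete for $\alpha\in(0,1)\cup(1,2]$ --- which is exactly the range the paper itself proves, the case $\alpha=1$ being quoted from Berger.
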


\begin{rmk}
\label{rmk-lldgen}
Caravenna and Doney~\cite[Theorem~1.1]{CaravennaDoney} proved Theorem~\ref{thm-lldgen} for $\alpha\in(0,1)\cup(1,2)$ 
and (amongst other things) this was extended by Berger~\cite[Theorem 2.3]{Berger19}
to the range $\alpha\in(0,2)$ (focusing on the lattice case). 
Our analytic proof covers the range $\alpha\in(0,1)\cup(1,2]$ so the combined results cover the range $\alpha\in(0,2]$.
 Our arguments cover the lattice and nonlattice cases simultaneously.
As mentioned before, our main contribution is to provide a new proof which generalises easily to the dynamical setting.

In the range $|x|\ll a_n$,
the estimate~\eqref{eq-lld1d} follows from the local limit theorems of Gnedenko~\cite[Chap.~9, Sec.~50]{GnedenkoKolmogorov} and Stone~\cite{Stone65}, and in particular the estimate is sharp in the range
$|x|\approx a_n$.
Hence the main content of Theorem~\ref{thm-lld1d} is when $|x|\gg a_n$.
\end{rmk} 

\begin{rmk} We have excluded the problematic case $\alpha=1$ which was completely solved by Berger~\cite{Berger19}.  In fact, our methods apply without modification for $\alpha=1$ in the symmetric case $b_n=0$.
However, in the nonsymmetric case the estimate in Lemma~\ref{lemma-chfacts}(i) below fails (see \cite[Lemma~5]{Erickson70}).  Consequently, without 
refining our methods further we would obtain a suboptimal estimate in Theorem~\ref{thm-lldgen} for $\alpha=1$, $b_n\neq0$.
\end{rmk}

\begin{rmk}
The estimates in Theorem~\ref{thm-lldgen} are proved under assumption~\eqref{eq-rvbt} which is necessary and sufficient for convergence to the stable law $Y_\alpha$.  For stronger estimates under more restrictive hypotheses, we refer 
to~\cite{Berger19,doney,Gouezel11}.
\end{rmk}

Next, we generalise to the multivariate situation.
Let $\bbS = \{x \in \R^d : |x| = 1 \}$ denote the unit sphere in $\R^d$.
(Throughout, $|\;|$ denotes the Euclidean norm.)

\begin{defn} \label{def-reg}
An $\R^d$-valued random variable $X$ is
    \emph{regularly varying} with index $p$ if
    there exists a Borel probability measure $\sigma$
    on
    $\bbS$, such that
    \[
        \lim_{t \to \infty}
        \frac{\cP(|X| > \lambda t, \ X / |X| \in A)}{\cP(|X| > t)}
        = \lambda^p \sigma(A)
    \]
    for all $\lambda > 0$ and all Borel sets $A \subset \bbS$ with $\sigma(\partial A) = 0$.

For $p=-\alpha<0$, we say that $X$ is \emph{nondegenerate} if 
$\int_\bbS|u\cdot\theta|^\alpha\,d\sigma(\theta)>0$ for all $u\in\bbS$.
\end{defn}

Taking $A=\bbS$, we have that $|X|$ is a scalar regularly varying
function.
Hence there exists a slowly varying function $\ell:[0,\infty)\to(0,\infty)$
such that $\cP(|X|>t)=t^p\ell(t)$. 

Let $X$ be an $\R^d$-valued random variable with $\E|X|^2=\infty$.  We suppose that
$X$ is nondegenerate and regularly varying with index $-\alpha$ where $\alpha\in(0,1)\cup(1,2]$.
We define $\tilde\ell$, $a_n$ and $b_n$ as in the scalar case (again taking $\ell$ to be continuous).

Let $Y_\alpha$ denote the $d$-dimensional stable law with spectral measure $\Lambda=\cos\frac{\pi\alpha}{2}\Gamma(1-\alpha)\sigma$ and
characteristic function
\begin{equation} \label{eq-Y}
\E(e^{is\cdot Y_\alpha})=\exp \Big\{-\int_\bbS |s\cdot \theta|^\alpha\big(1-i\sgn(s\cdot\theta)\tan\tfrac{\pi\alpha}{2} \big)\,d\Lambda(\theta) \Big\}, \quad s\in\R^d.
\end{equation}
By~\cite{Rvaceva54}, 
$X$ is in the domain of attraction of $Y_\alpha$.  Indeed,
$a_n^{-1}(S_n-b_n)\to_d Y_\alpha$ as $n\to\infty$ where $S_n=X_1+\dots+X_n$ and the $X_i$ are independent copies of $X$.

Let $\Pi_h(x)=\prod_{j=1}^d(x_j-h,x_j+h]$ for $x\in\R^d$, $h>0$.

\begin{thm} \label{thm-lldgen}
Let $\alpha\in(0,1)\cup(1,2]$.
For every $h>0$, there is a constant $C>0$ such that
\begin{align} \label{eq-lld}
\cP \big( S_n - b_n \in \Pi_h(x)\big) \le C\frac{n}{a_n^d} \, \frac{\tilde\ell(|x|)}{1+|x|^\alpha}
\quad\text{for all $n\ge1$, $x\in\R^d$.}
\end{align}
\end{thm}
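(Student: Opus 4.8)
The plan is to bound $\cP(S_n-b_n\in\Pi_h(x))$ by a Fourier‑inversion integral, dispatch the bulk range $|x|\lesssim a_n$ by the classical stable local limit theorem estimate, and handle the tail range $|x|\gg a_n$ by a Nagaev‑type truncation of $X$ at scale $a_n$. First I would reduce to $b_n=0$: this is automatic for $\alpha\in(0,1)$, and for $\alpha\in(1,2]$ it is achieved by replacing $X$ with $X-\E X$. Fix $h>0$ and choose a nonnegative $\psi$ (of Fej\'er type) with $\psi\ge\mathbf 1_{[-h,h]^d}$, with $\hat\psi$ supported in a small ball $\{|s|\le\delta\}$, and with $\psi(y)\lesssim(1+|y|)^{-2m}$ for a large fixed integer $m$; then
\[
\cP(S_n\in\Pi_h(x))\ \le\ \E\,\psi(S_n-x)\ =\ \frac1{(2\pi)^d}\int_{|s|\le\delta}\hat\psi(s)\,e^{-is\cdot x}\,\phi(s)^n\,ds,\qquad \phi(s):=\E(e^{is\cdot X}).
\]
The point of taking $\hat\psi$ of small compact support is that we never need to control $\phi$ away from $0$; this is what lets the lattice and nonlattice cases be treated simultaneously and bypasses aperiodicity.

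For the bulk range $|x|\le Ka_n$ ($K$ a large constant fixed later) it suffices to prove $\cP(S_n\in\Pi_h(x))\lesssim a_n^{-d}$, since $\frac n{a_n^d}\frac{\tilde\ell(|x|)}{1+|x|^\alpha}\gtrsim_K a_n^{-d}$ there by $n\tilde\ell(a_n)a_n^{-\alpha}\to1$ and Potter's bounds on $\tilde\ell$. This is the classical estimate: nondegeneracy of $X$ in the stable domain gives $\Re(1-\phi(s))\gtrsim|s|^\alpha\tilde\ell(1/|s|)$ and $|1-\phi(s)|\lesssim|s|^\alpha\tilde\ell(1/|s|)$ for $|s|\le\delta$ (after shrinking $\delta$), whence $|\phi(s)|^n\le e^{-cn|s|^\alpha\tilde\ell(1/|s|)}$; substituting $s=u/a_n$ and using $n\tilde\ell(a_n)a_n^{-\alpha}\to1$ together with Potter's bounds,
\[
\int_{|s|\le\delta}|\phi(s)|^n\,ds\ \le\ a_n^{-d}\int_{\R^d}e^{-c'\,(|u|^{\alpha+\eps}\wedge|u|^{\alpha-\eps})}\,du\ \lesssim\ a_n^{-d}.
\]

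The content is the tail range $|x|>Ka_n$. Write $X=X\mathbf 1_{\{|X|\le M\}}+X\mathbf 1_{\{|X|>M\}}$ with $M$ a fixed large multiple of $a_n$, so that $n\cP(|X|>M)\le\tfrac12$; this gives $\phi=\beta+\gamma$ with $\beta(s)=\E(e^{is\cdot X}\mathbf1_{\{|X|\le M\}})$, $\gamma(s)=\E(e^{is\cdot X}\mathbf1_{\{|X|>M\}})$, and $\phi^n=\sum_{k=0}^n\binom nk\beta^{\,n-k}\gamma^{\,k}$. Substituting into the Fourier integral and reading off the $k$‑th term probabilistically, $\cP(S_n\in\Pi_h(x))\le\sum_k\binom nk\,\E\big[\psi(\tilde S_{n-k}+W_k-x);\ |X^{(1)}|>M,\dots,|X^{(k)}|>M\big]$, where $\tilde S_{n-k}$ is a sum of $n-k$ independent truncated copies $X_i\mathbf1_{\{|X_i|\le M\}}$, independent of $W_k=X^{(1)}+\dots+X^{(k)}$. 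The $k=0$ term is at most $\cP(\tilde S_n\in\Pi_h(x))$, for which I would prove the local large deviation bound $\cP(\tilde S_n\in\Pi_h(w))\lesssim a_n^{-d}e^{-c|w|/a_n}$ for all $w$ (increments bounded by $M\asymp a_n$, total variance $\asymp a_n^2$); this is comfortably below $\frac n{a_n^d}\frac{\tilde\ell(|x|)}{1+|x|^\alpha}$ once $K$ is large, the exponential beating the power $|x|^{-\alpha-\eps}$ after $n\tilde\ell(a_n)a_n^{-\alpha}\to1$ removes the $a_n$'s. For $k\ge1$ the same density bound $\E\psi(\tilde S_{n-k}+\cdot)\lesssim a_n^{-d}$, with its density‑type exponential decay away from the origin, reduces the $k$‑th term to $\binom nk\cP(|X|>M)^k$ times the integral of this density against the law of $W_k-x$; splitting according to whether $|W_k-x|\le|x|/2$, the near‑diagonal part contributes $\lesssim\binom nk\cP(|X|>M)^k\,a_n^{-d}\,\cP(|W_k|\ge|x|/2)$, which one sums geometrically using $\cP(|X|>t)=t^{-\alpha}\ell(t)$ and $n\cP(|X|>M)\le\tfrac12$, the dominant $k=1$ contribution being $\lesssim n\,a_n^{-d}\,\cP(|X|\ge|x|/2)\lesssim\frac n{a_n^d}\frac{\ell(|x|)}{|x|^\alpha}$ (recall $\tilde\ell=\ell$ for $\alpha<2$, and $\ell\lesssim\tilde\ell$ for $\alpha=2$); the off‑diagonal part $|W_k-x|>|x|/2$ is absorbed into the exponential tail of the local large deviation estimate together with the polynomial decay of $\psi$, provided $2m>d+\alpha$.

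The main obstacle is the local large deviation bound $\cP(\tilde S_n\in\Pi_h(w))\lesssim a_n^{-d}e^{-c|w|/a_n}$, \emph{uniformly in $n$}: it must carry the density prefactor $a_n^{-d}$ (which comes from the oscillation of $\beta$, i.e.\ from the bulk‑range Fourier estimate applied with $\beta$ in place of $\phi$, noting $\gamma$ is an $O(1/n)$ perturbation that does not spoil $\Re(1-\beta(s))\gtrsim|s|^\alpha\tilde\ell(1/|s|)$) and simultaneously the exponential tail decay in $|w|/a_n$ (which comes from the increments being bounded by $M\asymp a_n$) — the crude tail bound $\cP(|\tilde S_n|>t)\lesssim e^{-ct/a_n}$ alone is too weak, precisely because it lacks the $a_n^{-d}$ prefactor. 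I would obtain it by combining an exponential tilt in the direction $w/|w|$ by an amount of order $1/a_n$ with the bulk Fourier estimate applied to the tilted law of $X\mathbf1_{\{|X|\le M\}}$; the computation to check is that such a tilt changes $\beta$ only by a factor $1+O(1/n)$ and preserves the quantitative lower bound on $\Re(1-\beta)$, using that $\beta$ is entire because $X\mathbf1_{\{|X|\le M\}}$ is bounded. Secondary technical points, none serious: the pervasive bookkeeping of the slowly varying $\tilde\ell$ via Potter's inequalities, the $\alpha=2$ adjustments (where $\tilde\ell\ne\ell$), a separate elementary treatment of bounded $n$, and — for $d\ge2$, where $X$ need not have a density — the fact that ``one big jump landing near $x$'' can only be quantified through the spherical tail $\cP(X\in\Pi_h(x))\le\cP(|X|>|x|-\sqrt d\,h)\asymp|x|^{-\alpha}\ell(|x|)$, which the asserted estimate is exactly strong enough to accommodate (being weaker by the factor $a_n^d$).
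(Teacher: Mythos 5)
Your proposal is correct in outline, but it takes a genuinely different route from the paper. You follow the Nagaev/Fuk--Nagaev philosophy: truncate $X$ at scale $M\asymp a_n$, expand $\phi^n=\sum_k\binom{n}{k}\beta^{n-k}\gamma^k$, and control the ``one big jump'' terms through $\cP(|X|>|x|/2)$ while the truncated part supplies a local density bound with exponential tails. This is close in spirit to Caravenna--Doney and Berger, which the paper explicitly sets out to avoid. The paper never truncates: it stays entirely on the Fourier side and extracts the decay in $|x|$ from an oscillation (modulus-of-continuity) trick --- since $e^{-is\cdot x}$ changes sign under $s\mapsto s-\pi x/|x|^2$, one has $I_{n,x}=\tfrac12\int e^{-is\cdot x}\bigl(r(s)\Psi(s)^n-r(s-h)\Psi(s-h)^n\bigr)\,ds$ with $|h|=\pi/|x|$, and the H\"older modulus of continuity $|\Psi(s+h)-\Psi(s)|\lesssim|h|^\alpha\ell(1/|h|)$ (for $\alpha<1$; for $\alpha>1$ one first integrates by parts and applies the same trick to $\Psi^{n-1}\partial_j\Psi$) combined with $|u^n-v^n|\le n|u-v|(|u|^{n-1}+|v|^{n-1})$ and the integral bound $\int|s|^\beta L(1/|s|)|\Psi(s)|^n\,ds\lesssim L(a_n)a_n^{-(d+\beta)}$ delivers the result. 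What each buys: your route is probabilistically transparent (the term $n\,a_n^{-d}\,\cP(|X|>|x|/2)$ is visibly the one-big-jump contribution), but it hinges on the delicate uniform estimate $\cP(\tilde S_{n-k}\in\Pi_1(w))\lesssim a_{n-k}^{-d}e^{-c|w|/a_n}$ and on nontrivial bookkeeping over $k$; the paper's route is shorter, uses only three characteristic-function estimates, and --- decisively for the authors --- transfers verbatim to the perturbed transfer operators $R(s)$ of Section~3, where a big-jump decomposition of $v_n$ has no natural operator analogue.

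Two sub-claims in your sketch need repair, though both are fixable. First, the assertion that the exponential tilt changes $\beta$ ``only by a factor $1+O(1/n)$'' is not what you actually need (pointwise the change is of order $|s|\,a_n/n$, which for $\alpha<1$ can dominate $|s|^\alpha\ell(1/|s|)$). The clean statement is that, because the truncated increment is bounded by $M\asymp a_n$ and $|\lambda|\asymp 1/a_n$, the tilted law has density bounded above and below by absolute constants relative to the truncated law, whence $1-\Re\beta_\lambda(s)\ge c\,(1-\Re\beta(s))$; and $1-\Re\beta(s)\gtrsim|s|^\alpha\tilde\ell(1/|s|)$ holds only for $|s|\ge C/a_n$ (the ball $|s|\le C/a_n$ contributes $O(a_n^{-d})$ to the inversion integral trivially, so this suffices). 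Second, for $k$ comparable to $n$ the density prefactor is $a_{n-k}^{-d}$, not $a_n^{-d}$; these terms must be killed by the factorial decay of $\binom{n}{k}(2n)^{-k}$ \emph{while retaining} the factor $\cP(|X|>|x|/2)$, since $\tilde\ell(|x|)/|x|^\alpha$ can be far smaller than any quantity depending on $n$ alone.
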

Theorem~\ref{thm-lld1d} for $\alpha\neq1$ is immediate from Theorem~\ref{thm-lldgen}.

\begin{rmk} We emphasize that our method works simultaneously for lattice and nonlattice distributions, bypassing any aperiodicity assumptions.
In particular, our result covers distributions that are jointly lattice and nonlattice avoiding the consideration of numerous different cases that arise in the corresponding local limit theorems (see the discussion in~\cite{Doney91}).

Moreover, in the dynamical setting of Section~\ref{sec-dyn}, local limit theorems would require additional hypotheses beyond the probabilistic ones, and our method avoids such hypotheses.
\end{rmk}

\begin{rmk} Berger~\cite{Berger19b} obtains LLD for multivariate stable laws in the case when they are lattice distributed. Furthermore,~\cite{Berger19b} allows the  scalings $a_n$ to vary from component to component. 
\end{rmk}

Our analytic proof of Theorem~\ref{thm-lldgen} is given in Section~\ref{sec-iid}.
In Section~\ref{sec-dyn}, we show that our proof applies to a class of deterministic dynamical systems.
In fact, we prove 
a stronger operator stable LLD in Theorem~\ref{thm-lldGM} which yields the desired stable LLD 
in Corollary~\ref{cor-mixinglld}.

\vspace{-2ex}
\paragraph{Notation}
We write $a_n\ll b_n$ if there are constants $C>0$, $n_0\ge1$ such that
$a_n\le Cb_n$ for all $n\ge n_0$.
As usual, $a_n=o(b_n)$ means that $\lim_{n\to\infty}a_n/b_n=0$
and $a_n\sim b_n$ means that $\lim_{n\to\infty}a_n/b_n=1$.

\section{Stable local large deviations in the i.i.d.\ set up}
\label{sec-iid}

In this section, we provide an analytic proof of Theorem~\ref{thm-lldgen} establishing local large deviations for i.i.d.\ random variables in the domain of a multivariate stable law.
We abbreviate $\Pi_h(0)$ to $\Pi_h$.

Fix $\alpha\in(0,1)\cup(1,2]$, and let $X$ be $\R^d$-valued with $\E|X|^2=\infty$, nondegenerate and regularly varying with index $-\alpha$. 
Define
$a_n$ and $\tilde\ell$ as in Section~\ref{sec-intro}.
Since we exclude the case $\alpha=1$, we can suppose without loss of generality
that $b_n=0$.  This is automatic for $\alpha\in(0,1)$ while for $\alpha\in(1,2]$ we can replace $X$ by $X-\E X$.  In other words, we suppose 
that $\E X=0$ for $\alpha\in(1,2]$.

\subsection{Technical lemmas}

The proof of the results obtained here exploits classical approaches~\cite{GnedenkoKolmogorov,IL} for characteristic functions collected in Lemmas~\ref{lemma-chfact1} and~\ref{lemma-chfacts} below. 

For $s\in\R^d$, define
\(
\Psi(s)=\E(e^{is\cdot X}).
\)

\begin{lemma} \label{lemma-chfact1}
There exist constants $\eps,\,c>0$, such that
\[
|\Psi(s) | \leq \exp\{- c |s|^{\alpha} \tilde\ell(1/|s|)\}
\quad\text{for all $s\in\Pi_{3\eps}$.}
\]
\end{lemma}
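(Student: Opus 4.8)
The plan is to estimate $|\Psi(s)|$ for small $s$ by a standard truncation argument on the characteristic function, exploiting that $X$ is regularly varying with index $-\alpha$. Writing $\Psi(s) = 1 + (\E(e^{is\cdot X}) - 1)$, I would split $\E(e^{is\cdot X} - 1)$ according to $\{|X| \le 1/|s|\}$ and $\{|X| > 1/|s|\}$. On the far set, $|e^{is\cdot X}-1|\le 2$ and $\cP(|X|>1/|s|) = |s|^\alpha\ell(1/|s|)$ by regular variation, so this piece contributes $O(|s|^\alpha \tilde\ell(1/|s|))$ — with the harmless adjustment that for $\alpha=2$ one uses $\tilde\ell$ rather than $\ell$. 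On the near set, I would Taylor expand: $e^{is\cdot X}-1 = i(s\cdot X) - \tfrac12 (s\cdot X)^2 + O(|s\cdot X|^3 \wedge |s\cdot X|^2)$. The linear term either vanishes (for $\alpha\in(1,2]$ since $\E X = 0$, after absorbing the tail contribution of the truncation) or is $O(|s|^\alpha\tilde\ell(1/|s|))$ (for $\alpha\in(0,1)$, using $\E(|X|1_{|X|\le 1/|s|})\ll |s|^{\alpha-1}\tilde\ell(1/|s|)$ by Karamata). The quadratic term is controlled by $\E(|s\cdot X|^2 1_{|X|\le 1/|s|}) \le |s|^2\,\E(|X|^2 1_{|X|\le 1/|s|}) \ll |s|^\alpha \tilde\ell(1/|s|)$, again by Karamata's theorem for truncated moments of regularly varying tails (this is where the definition $\tilde\ell(x)=1+\int_1^{1+x}\ell(u)/u\,du$ for $\alpha=2$ enters). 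The cubic remainder is even smaller.

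Collecting these, I obtain $\Psi(s) = 1 - \Phi(s)$ where $\Re\Phi(s) \ge c' |s|^\alpha \tilde\ell(1/|s|)$ for some $c'>0$ and all sufficiently small $s$, and $|\Phi(s)| \ll |s|^\alpha\tilde\ell(1/|s|)$ as well. The lower bound on the real part is exactly where nondegeneracy is used: after the scaling $a_n^{-1}S_n \to_d Y_\alpha$, the "leading" behaviour of $\Phi$ is $\asymp \int_{\bbS}|s\cdot\theta|^\alpha\,d\sigma(\theta)\cdot|s|^\alpha\tilde\ell(1/|s|)$, and nondegeneracy ($\int_\bbS |u\cdot\theta|^\alpha d\sigma(\theta) > 0$ for all $u\in\bbS$) together with compactness of $\bbS$ gives a uniform positive lower bound for this angular integral. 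Then for $|s|$ small enough, $|\Psi(s)| = |1-\Phi(s)| \le 1 - \Re\Phi(s) + \tfrac12|\Phi(s)|^2 \le 1 - \tfrac12 c'|s|^\alpha\tilde\ell(1/|s|) \le \exp\{-\tfrac12 c'|s|^\alpha\tilde\ell(1/|s|)\}$, which is the claimed bound on a small cube $\Pi_{3\eps}$ (choosing $\eps$ small enough that the estimate holds on all of $\Pi_{3\eps}$, the factor $3$ being reserved for later convolution/perturbation arguments in the main proof).

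The main obstacle is making the lower bound $\Re\Phi(s) \gtrsim |s|^\alpha\tilde\ell(1/|s|)$ genuinely uniform in the direction $s/|s|$ while simultaneously handling the slowly varying factor and the error terms. The angular part requires care because the measure $\sigma$ need not have a density, so one argues via the continuity of $u\mapsto \int_\bbS |u\cdot\theta|^\alpha\,d\sigma(\theta)$ (dominated convergence) and its strict positivity on the compact sphere to extract a uniform constant; the interaction between this and the uniformity of the Karamata estimates in $|s|$ (one needs the slowly varying function to "vary slowly enough" that the remainder terms are genuinely lower order, uniformly over directions) is the delicate bookkeeping. One clean way to organize this is to first establish, for the scalar projections $s\cdot X$, the one-dimensional characteristic function expansion $1 - \E(e^{it(s/|s|)\cdot X}) = (K(s/|s|) + o(1))\,|t|^\alpha\ell(1/|t|)(1 - i\,\sgn(t)\tan\tfrac{\pi\alpha}{2} \cdot(\cdots))$ as $t\to 0$ with $K$ continuous and bounded below, which is the classical computation in Gnedenko–Kolmogorov / Ibragimov–Linnik, then take real parts and infimize over directions.
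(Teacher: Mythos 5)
Your argument is correct in outline, but it takes a genuinely different route from the paper. You prove the bound by the classical direct method: truncate at $|X|\le 1/|s|$, Taylor-expand, control each piece by Karamata, and extract the lower bound on $\Re(1-\Psi(s))$ from the angular nondegeneracy. The paper instead deduces the estimate from the already-known distributional convergence $a_n^{-1}S_n\to_d Y_\alpha$: by the L\'evy continuity theorem, $|\Psi(u/a_n)|^n\to e^{-k_u}$ \emph{uniformly} in $u\in\bbS$ (uniformity on compacts is automatic for characteristic functions of weakly convergent laws), hence $n\log|\Psi(u/a_n)|\to -k_u$ uniformly; inverting $n\sim (t^\alpha\tilde\ell(1/t))^{-1}$ then gives $\log|\Psi(s)|\sim -k_{s/|s|}|s|^\alpha\tilde\ell(1/|s|)$, and nondegeneracy plus compactness yields $\min_u k_u>0$. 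The trade-off is exactly the one you flag at the end of your proposal: your route must make the Karamata/truncation estimates uniform over directions by hand (this is doable --- e.g.\ for the lower bound one only needs $\E\big((s\cdot X)^21_{\{|X|\le 1/|s|\}}\big)$, which is a quadratic form in $s$ whose normalised limit is positive definite by nondegeneracy --- but it is the delicate part, and the one-dimensional projection expansions you invoke have direction-dependent $o(1)$ terms that require a separate compactness argument to uniformise), whereas the paper gets all the uniformity for free from the continuity theorem and, importantly, the same argument transfers verbatim to the eigenvalue $\lambda(s)$ in the dynamical setting (Lemma~\ref{lemma-eigv}), which a direct characteristic-function expansion would not. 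Two minor points in your write-up: the displayed leading term should read $\int_\bbS|u\cdot\theta|^\alpha\,d\sigma(\theta)$ with $u=s/|s|$ rather than $|s\cdot\theta|^\alpha$; and for the lower bound you can avoid the cancellation bookkeeping entirely by noting $\Re(1-\Psi(s))=\E(1-\cos(s\cdot X))\ge \tfrac14\E\big((s\cdot X)^21_{\{|X|\le 1/|s|\}}\big)$, since the integrand is nonnegative everywhere.
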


\begin{proof}
Since $S_n=X_1+\dots+X_n$ is a sum of i.i.d.\ random vectors and $a_n^{-1}S_n\to_d Y_\alpha$, it follows from
the L\'evy continuity theorem that
$\lim_{n\to\infty}\Psi(s/a_n)^n= \E(e^{is\cdot Y_\alpha})$ for all $s\in\R^d$.
The convergence is uniform on compact sets so
$\lim_{n\to\infty}\Psi(u/a_n)^n= \E(e^{iu\cdot Y_\alpha})$ 
uniformly in $u\in\bbS$.
Hence
\[
\lim_{n\to\infty}|\Psi(u/a_n)|^n=e^{-k_u}
\quad\text{uniformly in $u\in\bbS$,}
\]
where $k_u=\int_\bbS |u\cdot\theta|^\alpha \,d\Lambda(\theta)$.
By compactness, $k_u$ is bounded and hence
\[
\lim_{n\to\infty}n\log|\Psi(u/a_n)|=-k_u
\quad\text{uniformly in $u\in\bbS$.}
\]
Setting $a_n^{-1}=t$ and inverting to obtain
$n\sim(t^\alpha\tilde\ell(1/t))^{-1}$ as $n\to\infty$, we obtain
\[
\lim_{t\to0^+}(t^\alpha\tilde\ell(1/t))^{-1}\log|\Psi(tu)|=-k_u
\quad\text{uniformly in $u\in\bbS$.}
\]
(The details of this last step are identical to the last five lines of the proof of~\cite[Lemma~6.4]{AD01}.)
By nondegeneracy and compactness, $\min_{u\in \bbS}k_u>0$.
Writing $s=tu$,
\[
\log|\Psi(s)|\sim -k_u|s|^\alpha\tilde\ell(1/|s|)
\quad\text{as $s\to0$.}
\]
Hence there exists $\eps>0$ such that 
$\log|\Psi(s)|\le -\frac12 k_u|s|^\alpha\tilde\ell(1/|s|)$
for all $s\in\Pi_{3\eps}$ and
the result follows with $c=\frac12\min_{u\in\bbS}k_u$.
\end{proof}

Throughout this section we fix $\eps$ so that Lemma~\ref{lemma-chfact1} holds.
Let $\partial_j=\partial/\partial {s_j}$.

\begin{lemma} \label{lemma-chfacts}
Let $M>0$. There exists $C>0$ such that
for all $s,\,h\in\Pi_M$,
\begin{itemize}
\item[(i)] $|\Psi(s+h)-\Psi(s)|\le C|h|^\alpha\ell(1/|h|)$ for $\alpha\in(0,1)$.
\item[(ii)] $|\partial_j\Psi(s+h)-\partial_j\Psi(s)|\le C|h|^{\alpha-1}\tilde\ell(1/|h|)$ for $\alpha\in(1,2]$, $j=1,\dots,d$.
\item[(iii)] $|\partial_j\Psi(s)|\le C|s|^{\alpha-1}\tilde\ell(1/|s|)$ for $\alpha\in(1,2]$, $j=1,\dots,d$.
\end{itemize}
\end{lemma}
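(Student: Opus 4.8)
The plan is to prove all three estimates by passing through the integral representation of the difference in characteristic functions and exploiting the regular variation of the tails of $X$ encoded in~\eqref{eq-rvbt}/Definition~\ref{def-reg}. For part (i), I would write
\[
\Psi(s+h)-\Psi(s)=\E\big(e^{is\cdot X}(e^{ih\cdot X}-1)\big),
\]
so that $|\Psi(s+h)-\Psi(s)|\le \E|e^{ih\cdot X}-1|\le \E\min\{2,|h||X|\}$. The standard tail computation then gives, for a regularly varying $|X|$ of index $-\alpha$ with $\alpha\in(0,1)$,
\[
\E\min\{2,|h||X|\}=\int_0^\infty \cP(\min\{2,|h||X|\}>t)\,dt \ll \int_0^{1/|h|}\cP(|X|>t)\,dt + \cP(|X|>1/|h|)\,,
\]
and Karamata's theorem (together with $\cP(|X|>t)=t^{-\alpha}\ell(t)$) bounds this by a constant times $|h|^{\alpha}\ell(1/|h|)$, uniformly since the bound does not involve $s$. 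The restriction $s,h\in\Pi_M$ plays essentially no role here beyond keeping everything in a compact set; the real input is $\alpha<1$, which makes $t\mapsto\cP(|X|>t)$ integrable near infinity after truncation.

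For parts (ii) and (iii) with $\alpha\in(1,2]$, the assumption $\E X=0$ (made at the start of Section~\ref{sec-iid}) lets me differentiate under the expectation: $\partial_j\Psi(s)=\E(iX_j e^{is\cdot X})$, which is legitimate since $\E|X|<\infty$ when $\alpha>1$. For (iii), write $\partial_j\Psi(s)=\E(iX_j(e^{is\cdot X}-1))$ using $\E X_j=0$, so $|\partial_j\Psi(s)|\le \E(|X|\min\{2,|s||X|\})$, and the analogous Karamata estimate — now the relevant integral is $\int \cP(|X|>t)$ against a weight producing $\int_0^{1/|s|} t\,\cP(|X|>t)\,dt + |s|^{-1}\cP(|X|>1/|s|)$ — gives $\ll |s|^{\alpha-1}\tilde\ell(1/|s|)$; the appearance of $\tilde\ell$ rather than $\ell$ is exactly the boundary phenomenon at $\alpha=2$ where $\E(|X|^2\wedge\cdot)$ ceases to be directly controlled by $\ell$ and one picks up the logarithmic correction $\tilde\ell(x)=1+\int_1^{1+x}\ell(u)/u\,du$. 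For (ii), apply the same device to $\partial_j\Psi(s+h)-\partial_j\Psi(s)=\E(iX_j e^{is\cdot X}(e^{ih\cdot X}-1))$, bound by $\E(|X|\min\{2,|h||X|\})$, and use the identical Karamata estimate with $s$ replaced by $h$; uniformity in $s$ is automatic since $s$ only enters through the unimodular factor $e^{is\cdot X}$.

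I expect the main obstacle to be the bookkeeping at $\alpha=2$: verifying that the truncated moments $\E(|X|^2\,1_{\{|X|\le R\}})$ and the tail $R^2\cP(|X|>R)$ combine to give precisely $R^{2-\alpha}\tilde\ell(R)=\tilde\ell(R)$ up to constants, via the representation of $\tilde\ell$ as an integral of $\ell(u)/u$, rather than something larger. This is the classical but slightly delicate computation underlying the normalization $n\tilde\ell(a_n)/a_n^\alpha\to1$, and it is where one must be careful that the slowly varying $\ell$ can itself tend to infinity. A secondary technical point is justifying differentiation under the expectation sign and the elementary inequalities $|e^{iy}-1|\le\min\{2,|y|\}$ uniformly, but these are routine. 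Everything else reduces to Karamata's theorem on integrals of regularly varying functions, applied on the compact parameter set $\Pi_M$ where no further care about the slowly varying factor is needed.
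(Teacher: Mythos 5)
Your proposal is correct and follows essentially the same route as the paper: the truncation at scale $K\approx 1/|h|$ implicit in your bounds $\E\min\{2,|h||X|\}$ and $\E\big(|X|\min\{2,|h||X|\}\big)$ is exactly the paper's split over the events $\{|X|>K\}$ and $\{|X|\le K\}$, followed by Karamata's theorem (with the $\tilde\ell$ correction at $\alpha=2$), and part (iii) is likewise reduced to part (ii) via $\E X=0$. The only slip is a missing factor of $|h|$ in front of $\int_0^{1/|h|}\cP(|X|>t)\,dt$ in your display for part (i), but since your stated conclusion $|h|^{\alpha}\ell(1/|h|)$ is the correct one, this is clearly typographical.
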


\begin{proof}
(i) For $K>0$, write
\(
\Psi(s+h)-\Psi(s)=A+B
\)
where
\[
A=\E \big(1_{\{|X|> K\}}(e^{ih\cdot X}-1)e^{is\cdot X}\big), \qquad
B=\E \big(1_{\{|X|\le K\}}(e^{ih\cdot X}-1)e^{is\cdot X}\big).
\]
Note that
\[
|A|\le 2\E 1_{\{|X|> K\}}=2\cP(|X|>K) =2K^{-\alpha}\ell(K).
\]
Next, let $G(x)=\cP(|X|\le x)$ denote the distribution function of $|X|$,
so $1-G(x)=x^{-\alpha}\ell(x)$. Then
\[
|B|\le |h|\E\big(|X| 1_{\{|X|\le K\}}\big)
=|h|\int_0^K x\,dG(x).
\]
Using integration by parts and Karamata's Theorem,
\begin{align*}
\int_0^K x\,dG(x)  & = -\int_0^K x\,d(1-G(x))
=-(x(1-G(x))\Big|_0^K+\int_0^K (1-G(x))\,dx
 \\  & \le \int_0^K (1-G(x))\,dx
=\int_0^K x^{-\alpha}\ell(x)\,dx
=(1-\alpha)^{-1}K^{1-\alpha}\ell(K).
\end{align*}
Hence $|B|\ll |h|K^{1-\alpha}\ell(K)$.
Taking $K\approx 1/|h|$ yields the desired estimate.
\\[.75ex]
(ii)
For $K>0$, write $\partial_j\Psi(s+h)-\partial_j\Psi(s)=i(A+B)$ where
\[
A=\E \big(1_{\{|X|> K\}}X_j(e^{ih\cdot X}-1)e^{is\cdot X}\big), \qquad
B=\E \big(1_{\{|X|\le K\}}X_j(e^{ih\cdot X}-1)e^{is\cdot X}\big).
\]
Using integration by parts and Karamata's Theorem,
\begin{align*}
|A|  & \le  2\E\big(|X|1_{\{|X|>K\}}\big)  =2\int_K^\infty x\,dG(x)
\\ & =2K(1-G(K)) + 2\int_K^\infty (1-G(x))\,dx
=2K^{-(\alpha-1)}\ell(K)+2\int_K^\infty x^{-\alpha}\ell(x)\,dx
\\ & =2\{1+(\alpha-1)^{-1}\}K^{-(\alpha-1)}\ell(K)
\ll K^{-(\alpha-1)}\tilde\ell(K),
\end{align*}
and
\begin{align*}
|B|\le |h|\E\big(|X|^21_{\{|X|\le K\}}\big)
& =|h|\int_0^K x^2\,dG(x)
\le 2|h|\int_0^K x(1-G(x))\,dx
\\ &
=2|h|\int_0^K x^{-(\alpha-1)}\ell(x)\,dx=2(2-\alpha)^{-1}|h|K^{2-\alpha}\tilde\ell(K).
\end{align*}
Again we take $K\approx 1/|h|$.
\\[.75ex]
(iii)  Since $\E X=0$, it follows from part (ii)
that $\partial_j\Psi(s)=\partial_j\Psi(s)-\partial_j\Psi(0)$ satisfies the desired estimate.
\end{proof}

\begin{lemma} \label{lemma-chint}
Let $L:(0,\infty)\to(0,\infty)$ be a continuous slowly varying function.
For all $c>0$, $\beta\ge0$, there exists $C>0$ such that
 for all $n\ge1$,
\[
\int_{\R^d} |s|^\beta L(1/|s|) \exp\{-nc|s|^\alpha\tilde\ell(1/|s|)\}\,ds\le C\frac{L(a_n)}{ a_n^{d+\beta}}.
\]
In particular,
\[
\int_{\Pi_{3\eps}} |s|^\beta L(1/|s|)|\Psi(s)|^n \,ds\le C\frac{L(a_n)}{ a_n^{d+\beta}}.
\]
\end{lemma}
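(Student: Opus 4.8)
The plan is to reduce the $\R^d$ integral to a one‑dimensional integral by passing to polar coordinates and then to invoke standard Karamata‑type asymptotics for integrals of regularly varying functions against a stretched‑exponential weight. Writing $s=r\omega$ with $r>0$, $\omega\in\bbS$, the integrand depends only on $r=|s|$, so
\[
\int_{\R^d} |s|^\beta L(1/|s|)\exp\{-nc|s|^\alpha\tilde\ell(1/|s|)\}\,ds
= |\bbS|\int_0^\infty r^{\beta+d-1} L(1/r)\exp\{-nc\,r^\alpha\tilde\ell(1/r)\}\,dr,
\]
where $|\bbS|$ is the surface area of the unit sphere. The substitution that makes the scale $a_n$ appear is $r=u/a_n$ (equivalently $r=tu$ with $t=a_n^{-1}\to 0$), which turns the exponent into $-\,n c\,a_n^{-\alpha}u^\alpha\tilde\ell(a_n/u)$ and, using $n\tilde\ell(a_n)\sim a_n^\alpha$ together with the slow variation of $\tilde\ell$, into something comparable to $-c'u^\alpha$ on the bulk of the integration range. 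After the substitution the prefactor $r^{\beta+d-1}L(1/r)\,dr$ becomes $a_n^{-(\beta+d)}u^{\beta+d-1}L(a_n/u)\,du$, and pulling out $L(a_n)$ via $L(a_n/u)\sim L(a_n)$ (uniformly for $u$ in compacts, and controllably on the tails by Potter's bounds) gives exactly $C L(a_n) a_n^{-(\beta+d)}$ times a convergent $u$‑integral. The remaining $u$‑integral $\int_0^\infty u^{\beta+d-1}L(a_n/u)\exp\{-c'u^\alpha\}\,du$ must be shown bounded uniformly in $n$: near $u=0$ the integrand is integrable because $\beta+d-1>-1$ and $L$ is slowly varying (hence $L(a_n/u)\le C_\eta (a_n/u)^\eta$ for small $\eta$ with $\eta<\beta+d$, say, by Potter), and for large $u$ the Gaussian‑type decay $e^{-c'u^\alpha}$ dominates any polynomial and any Potter factor $(a_n/u)^\eta$. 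One must be slightly careful that the constants in Potter's bound are uniform in $n$, but since $a_n\to\infty$ they may be taken uniform for all $n$ large, and the finitely many small $n$ are handled separately.

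The one genuinely delicate point — and the step I expect to be the main obstacle — is the behaviour of $\tilde\ell(a_n/u)$ inside the exponent for $u$ small, i.e.\ for $r$ not small, equivalently for $|s|$ of order one rather than of order $a_n^{-1}$. For such $s$ the argument $1/|s|$ of $\tilde\ell$ stays bounded, so $\tilde\ell(1/|s|)$ is bounded above and below by positive constants on the region $|s|\gtrsim 1$, and there $\exp\{-nc|s|^\alpha\tilde\ell(1/|s|)\}\le \exp\{-c'' n\}$, which is summable and contributes nothing to the bound; thus the only region that matters is $|s|\to 0$ as $n\to\infty$, precisely where the regular‑variation asymptotics are valid. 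Making this dichotomy rigorous (split $\int_0^\infty = \int_0^{\delta}+\int_\delta^\infty$ for a fixed small $\delta$, estimate the second piece by $e^{-c'' n}\int_\delta^\infty r^{\beta+d-1}L(1/r)\,dr$, and apply the substitution only on $\int_0^\delta$) is routine but is where the care goes; one uses $\tilde\ell(a_n/u)\ge \tfrac12\tilde\ell(a_n)$ and $\le 2\tilde\ell(a_n)$ for $u\in[\delta,1/\delta]$ and $n$ large, plus Potter on the complementary ranges of $u$.

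For the ``in particular'' statement, apply Lemma~\ref{lemma-chfact1} on $\Pi_{3\eps}$: since $|\Psi(s)|\le\exp\{-c|s|^\alpha\tilde\ell(1/|s|)\}$ there, we get $|\Psi(s)|^n\le\exp\{-nc|s|^\alpha\tilde\ell(1/|s|)\}$, and $\Pi_{3\eps}\subset\R^d$, so
\[
\int_{\Pi_{3\eps}}|s|^\beta L(1/|s|)|\Psi(s)|^n\,ds
\le \int_{\R^d}|s|^\beta L(1/|s|)\exp\{-nc|s|^\alpha\tilde\ell(1/|s|)\}\,ds
\le C\frac{L(a_n)}{a_n^{d+\beta}},
\]
which is the claim. (One should note that the constant $c$ in the displayed exponential bound of the lemma statement is arbitrary, so there is no conflict with the particular $c$ furnished by Lemma~\ref{lemma-chfact1}.)
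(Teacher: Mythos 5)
Your proposal is correct and follows essentially the same route as the paper: rescale by $a_n$ (your $r=u/a_n$ is the paper's $s=\sigma/a_n$; the polar-coordinate reformulation is cosmetic), apply Potter's bounds to $L$ and $\tilde\ell$, use $na_n^{-\alpha}\tilde\ell(a_n)\sim1$ to reduce to a convergent $n$-independent integral, and deduce the second display from Lemma~\ref{lemma-chfact1}. One small repair in your auxiliary split: $\int_\delta^\infty r^{\beta+d-1}L(1/r)\,dr$ diverges, so on $\{r\ge\delta\}$ you must retain a factor $\exp\{-c\,r^\alpha\inf_{[0,1/\delta]}\tilde\ell\,\}$ of the exponential (using positivity and continuity of $\tilde\ell$ on compacts) rather than extracting the whole weight as $e^{-c''n}$.
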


\begin{proof}
Let $I_n=\int_{\R^d} |s|^\beta L(1/|s|) \exp\{-nc|s|^\alpha\tilde\ell(1/|s|)\}\,ds$.
Using the change of variables $s=\sigma/a_n$, we obtain
   \[
I_n=\frac{L(a_n)}{a_n^{d+\beta}}J_n \quad\text{where} \quad
   J_n=\int_{\R^d} \frac{L (a_n/|\sigma|)}{L(a_n)} |\sigma|^\beta \exp\{-c n|\sigma|^\alpha a_n^{-\alpha} \tilde\ell(a_n/|\sigma|)\} \, d \sigma.
\]
By Potter's bounds, for any $\delta\in(0,\alpha)$, 
there exists $c'>0$ such that
\[
J_n \ll \int_{|\sigma|>1} |\sigma|^{\beta+\delta} \exp\{-c' n|\sigma|^{\alpha-\delta} a_n^{-\alpha} \tilde\ell(a_n)\} \, d \sigma.
\]
Recalling that $na_n^{-\alpha}\tilde\ell(a_n) \sim 1$,
there exists $c''>0$ such that
\[
 J_n\ll \int_{|\sigma|>1} |\sigma|^{\beta+\delta} e^{-c'' |\sigma|^{\alpha-\delta} } \, d \sigma<\infty.
\]
A similar argument deals with the integral on $\{|\sigma|\le 1\}$.

The final statement follows from Lemma~\ref{lemma-chfact1}.
\end{proof}

\subsection{Proof of the stable LLD}
\label{subsec-stprlld}

In this subsection, we prove Theorem~\ref{thm-lldgen}.
We suppose without loss of generality that $h=1$; the result for smaller cubes is immediate and the result for larger cubes can be obtained by taking unions of smaller cubes.

As in~\cite{Stone65}, we convolve with a suitable function $\gamma$
with compactly supported Fourier transform $\hat\gamma$.
Specifically, fix a continuous integrable function $\gamma_0:\R\to[0,\infty)$ with $\gamma_0\ge1$ on $[-2,2]$ such that its Fourier transform $\widehat{\gamma_0}$ is real-valued, even and $C^2$ with support in $[-\eps,\eps]$.\footnote{
It is easily verified that such a $\gamma_0$ exists.  Start with
an even $C^\infty$ function $\widehat{\gamma_0}:\R\to[0,\infty)$ supported in $[-\frac{\eps}{2},\frac{\eps}{2}]$
with inverse Fourier transform $\gamma_0$.  Then $\gamma_0$ is real-valued and $C^\infty$.
Taking $\widehat{\gamma_0}\not\equiv0$ ensures that $\gamma_0(0)=\frac{1}{2\pi}\int_{-\infty}^\infty \widehat{\gamma_0}(\xi)\,d\xi>0$.   Replacing $\gamma_0(x)$ by $\gamma_0(ax)$ with $a$ sufficiently small ensures that $\gamma_0>0$ on $[-2,2]$.  (Such a scaling shrinks the support of $\widehat{\gamma_0}$, so the new $\widehat{\gamma_0}$ remains supported in $-\frac{\eps}{2},\frac{\eps}{2}]$.)  Next, replace $\gamma_0$ by $c\gamma_0$ for $c$ sufficiently large,
ensuring that $\gamma_0\ge1$ on $[-2,2]$.
Finally, replace $\gamma_0$ by $\gamma_0^2$ and $\widehat{\gamma_0}$ by $\widehat{\gamma_0}\star\widehat{\gamma_0}$ to ensure that $\gamma_0\ge0$.
}
For $s\in\R$, define
\[
r_0(s)=\frac{1}{2\pi} \frac{\sin s}{s}\widehat{\gamma_0}(s).
\]
We note that $r_0$ is $C^2$ and supported in $[-\eps,\eps]$.
Define $\gamma:\R^d\to[0,\infty)$ and $r:\R^d\to\R$,
\[
\gamma(y)=\gamma_0(y_1)\cdots \gamma_0(y_d), \qquad
r(s)=r_0(s_1)\cdots r_0(s_d).
\]

\begin{lemma} \label{lemma-iidgamma}
For $n\ge1$, $x\in\R^d$,
\[
\cP\big(S_n\in \Pi_1(x)\big)\le
\int_{\R^d} e^{-is\cdot x}r(s) \Psi(s)^n \, ds.
\]
\end{lemma}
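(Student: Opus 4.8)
The plan is to realize $1_{\Pi_1(x)}$ as something dominated by a convolution with the smoothing function $\gamma$, and then pass to Fourier side. Concretely, observe that for $y \in \Pi_1(x)$, each coordinate satisfies $|y_j - x_j| < 1 \le 2$, so by construction $\gamma_0(y_j - x_j) \ge 1$, whence $\gamma(y - x) \ge 1$ on $\Pi_1(x)$ and $\gamma \ge 0$ everywhere. Therefore
\[
\cP\big(S_n \in \Pi_1(x)\big) = \E\big(1_{\Pi_1(x)}(S_n)\big) \le \E\big(\gamma(S_n - x)\big),
\]
since $1_{\Pi_1(x)} \le \gamma(\,\cdot\, - x)$ pointwise.

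Next I would rewrite $\gamma$ using Fourier inversion. First handle the one-dimensional factor: since $\widehat{\gamma_0}$ is integrable (being continuous with compact support), inversion gives $\gamma_0(t) = \frac{1}{2\pi}\int_\R e^{-ist}\widehat{\gamma_0}(s)\,ds$. Taking the product over coordinates,
\[
\gamma(y - x) = \prod_{j=1}^d \gamma_0(y_j - x_j) = \frac{1}{(2\pi)^d}\int_{\R^d} e^{-is\cdot(y - x)} \widehat{\gamma_0}(s_1)\cdots\widehat{\gamma_0}(s_d)\, ds.
\]
Here I should be a little careful about which object is called $r$: the factor $\frac{\sin s}{s}$ in the definition of $r_0$ accounts for the fact that it is really the indicator $1_{(-1,1]}$ (whose Fourier transform is $\frac{\sin s}{s}$ up to normalization) that gets convolved with $\gamma_0$, so that $r$ is the Fourier transform of $1_{\Pi_1} \star \gamma$ up to constants. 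The cleanest route is simply to note that $1_{\Pi_1(x)} \le \gamma(\,\cdot - x)$, whose Fourier transform is the displayed product, and then bound $\widehat{\gamma_0} \le \frac{\sin s}{s}\widehat{\gamma_0}(s)$ — this is false pointwise, so instead one uses $1_{\Pi_1(x)} \le (1_{\Pi_1} \star \gamma)(\,\cdot - x)$ up to a harmless constant and identifies its Fourier transform with $(2\pi)^d r(s) e^{-is\cdot x}$. I would therefore phrase the dominating inequality as $1_{\Pi_1(x)}(y) \le c_0 \,(1_{\Pi_1}\star\gamma)(y-x)$ for a suitable constant (absorbed into $\gamma_0$, as the footnote allows), so that
\[
\cP\big(S_n \in \Pi_1(x)\big) \le c_0\, \E\big((1_{\Pi_1}\star\gamma)(S_n - x)\big) = c_0 \int_{\R^d} e^{-is\cdot x}\, r(s)\, \Psi(s)^n\, ds,
\]
using $\E(e^{is\cdot S_n}) = \Psi(s)^n$ and $r$ being (a constant times) the Fourier transform of $1_{\Pi_1}\star\gamma$, with the constant normalized to $1$; the interchange of $\E$ and $\int$ is justified since $r$ has compact support and $\Psi$ is bounded.

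The main obstacle is bookkeeping rather than substance: one must pin down the precise normalization so that the factor $\frac{\sin s}{s}$ in $r_0$ really is the Fourier transform of $1_{(-1,1]}$ (up to the $\frac{1}{2\pi}$), confirm that $1_{\Pi_1} \le 1$ pointwise is the right comparison and that $\gamma \ge 1$ on $[-2,2]^d$ gives $1_{\Pi_1} \star \gamma \ge 1_{\Pi_1}$ (indeed, convolving the indicator of a unit cube with a function that is $\ge 1$ on the cube $[-1,1]^d \supset$ the difference set), and verify absolute convergence of the Fourier integral so that Fubini applies. None of these steps is deep; the one genuinely needed input is the existence of $\gamma_0$ with the stated properties, which is the content of the footnote. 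I would write the proof in three short lines: (1) the pointwise domination $1_{\Pi_1(x)} \le (1_{\Pi_1}\star\gamma)(\,\cdot - x)$; (2) take expectations and apply Fourier inversion to $1_{\Pi_1}\star\gamma$, whose transform is $(2\pi)^d e^{-is\cdot x} r(s)$ after normalization; (3) use independence to replace $\E(e^{is\cdot S_n})$ by $\Psi(s)^n$ and invoke compact support of $r$ for the interchange.
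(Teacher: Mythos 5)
Your final route is essentially the paper's proof: the paper's double integral $\int_{\Pi_1(x)}\int\gamma(y-y')\,dF_n(y')\,dy$ is exactly $\E\big((1_{\Pi_1}\star\gamma)(S_n-x)\big)$ (up to reflection, using that $\gamma$ is even), the lower bound $2^d\,1_{\Pi_1(x)}\le 1_{\Pi_1(x)}\star\gamma$ uses $\gamma\ge1$ on $[-2,2]^d$ just as you do, and the normalizing constants cancel exactly to give the stated inequality with constant $1$. The only slip is the parenthetical claim that the difference set of the unit cube lies in $[-1,1]^d$ --- it is $(-2,2)^d$ --- but you correctly invoke $\gamma_0\ge1$ on $[-2,2]$ elsewhere, so the argument stands.
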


\begin{proof}
By the Fourier inversion formula,
\begin{align} \label{eq-inv}
\gamma(y)=\frac{1}{(2\pi)^d}\int_{\Pi_\eps} e^{is\cdot y} \hat\gamma(s)\,ds
=\frac{1}{(2\pi)^d}\int_{\Pi_\eps} e^{-is\cdot y} \hat\gamma(s)\,ds.
\end{align}
Let $F_n(x)=\cP \big(S_n\in \prod_{j=1}^d (-\infty,x_j]\big)$.
By Fubini,
\begin{align}  \nonumber
 \int_{\Pi_1(x)}\int_{\R^d} & \gamma(y-y')dF_n(y') \,dy
\\ & =
\frac{1}{(2\pi)^d}\int_{\Pi_\eps} 
\Big(\int_{\Pi_1(x)} e^{-is\cdot y}\,dy\Big)\hat\gamma(s)\Big(\int_{\R^d} e^{is\cdot y'}\,dF_n(y') \Big) \,ds
\label{eq-gamma} 
 \\ & =
2^d\int_{\Pi_\eps} e^{-is\cdot x}r(s) \Psi(s)^n \,ds.
\nonumber
\end{align}

Next, since $\gamma_0\ge0$ and $\gamma_0|_{[-2,2]}\ge1$,
\[
\int_{\R^d} \gamma(y-y')\,dF_n(y')\ge 
 \int_{\Pi_2(y)} \,dF_n(y')
=\cP\big(S_n\in \Pi_2(y)\big).
\]
Hence
\begin{align*}
\int_{\Pi_1(x)}
\int_{\R^d} \gamma(y-y')\,dF_n(y')\,dy
& \ge
\int_{\Pi_1(x)} \cP\big(S_n\in \Pi_2(y)\big)\,dy
 \ge 2^d\,\cP\big(S_n\in \Pi_1(x)\big).
\end{align*}
Combining this with~\eqref{eq-gamma} yields the desired result.
\end{proof}

\begin{pfof}{Theorem~\ref{thm-lldgen}}
Recall that we have reduced to the case $h=1$.
By Lemma~\ref{lemma-iidgamma}, it suffices to estimate
$I_{n,x}=\int_{\R^d} e^{-is\cdot x}r(s) \Psi(s)^n \,ds$.
Since $r$ is bounded and supported in $\Pi_\eps$, it follows from Lemma~\ref{lemma-chint} that
\[
|I_{n,x}|\ll \int_{\Pi_\eps}|\Psi(s)|^n\,ds\ll a_n^{-d}.
\]

If $n\gg (1+|x|^\alpha)/\tilde\ell(|x|)$, then
\(
|I_{n,x}|\ll a_n^{-d}\ll \frac{n}{a_n^d}\,\frac{\tilde\ell(|x|)}{1+|x|^\alpha}
\)
as required.
Hence to complete the proof it suffices to consider the case
$n\le c (1+|x|^\alpha)/\tilde\ell(|x|)$ for some $c>0$ fixed.
In particular, we can suppose that $|x|\ge \pi/\eps$, and it suffices to prove
that 
\(
|I_{n,x}|\ll \frac{n}{a_n^d}\frac{\tilde\ell(|x|)}{|x|^\alpha}
\)
for $n\le c'|x|^\alpha/\tilde\ell(|x|)$.
Since $a_n^\alpha/\tilde\ell(a_n)\sim n$ and $y\mapsto y^\alpha/\tilde\ell(y)$
is asymptotically increasing, this last restriction can be written as $a_n\le |x|$.

\vspace{2ex}
\noindent {\bf{The case $\alpha\in (0,1)$.}}
We exploit the modulus of continuity of $\Psi$ (see, for instance,~\cite[Chapter 1]{Katzn}).   Note that
\(
I_{n,x}=-\int_{\R^d} e^{-is\cdot x}r(s-h)\Psi(s-h)^n\,ds,
\)
where $h=\pi x/|x|^2$.
Hence
\begin{align} \label{eq-mod}
|I_{n,x}|=\frac12\Big|\int_{\R^d} e^{-is\cdot x}\big(r(s)\Psi(s)^n-r(s-h)\Psi(s-h)^n\big)\,ds\Big|
\le  I_1+I_2
\end{align}
where
\[
I_1= \int_{\R^d} |r(s)-r(s-h)||\Psi(s)|^n\,ds,
\quad
I_2=\int_{\R^d} |r(s-h)||\Psi(s)^n-\Psi(s-h)^n|\,ds.
\]

Since $r$ is supported in $\Pi_\eps$ and
$|x|\ge\pi/\eps$, the integrands in $I_1$ and $I_2$ are supported in $\Pi_{2\eps}$.
Using also that $r$ is bounded and Lipschitz,
\[
I_1\ll |x|^{-1} \int_{\Pi_{2\eps}} |\Psi(s)|^n\,ds,
\quad
I_2\ll \int_{\Pi_{2\eps}} |\Psi(s)^n-\Psi(s-h)^n|\,ds.
\]
By Lemma~\ref{lemma-chint},
\(
I_1\ll \frac{1}{a_n^d} \frac{1}{|x|}.
\)

Next recall the inequality
\begin{equation}
\label{eq-est}
|u^n-v^n|\le n|u-v| (|u|^{n-1}+|v|^{n-1}),
\end{equation}
which holds for all $u,v\in\C$, $n\ge1$.
Using this and Lemma~\ref{lemma-chfacts}(i),
\[
|\Psi(s)^n-\Psi(s-h)^n|\ll n |x|^{-\alpha}\ell(|x|)\big(|\Psi(s)|^{n-1}+|\Psi(s-h)|^{n-1}\big).
\]
Hence by Lemma~\ref{lemma-chint},
\(
I_2\ll n|x|^{-\alpha}\ell(|x|) \int_{\Pi_{3\eps}} |\Psi(s)|^{n-1}\,ds
\ll \frac{n}{a_n^d} \frac{\ell(|x|)}{|x|^\alpha}.
\)

\vspace{2ex}
\noindent {\bf{The case $\alpha\in (1,2]$.}}
Choose $j$ so that $|x_j|=\max\{|x_1|,\dots,|x_d|\}$.
Let $D_n=\Psi^{n-1}\partial_j\Psi$.
Integrating by parts, $I_{n,x}=E_1+E_2$ where
\[
E_1=\frac{1}{ix_j} \int_{\R^d} e^{-is\cdot x}\partial_jr(s)\Psi(s)^n\,ds,
\qquad
E_2=\frac{n}{ix_j} \int_{\R^d} e^{-is\cdot x}r(s)D_n(s)\,ds.
\]
Integrating by parts once more, and using that $r$ is $C^2$ and supported in $\Pi_\eps$,
\begin{align*}
|E_1| & \le \frac{1}{x_j^2}\int_{\Pi_\eps} |\partial_j^2 r(s)||\Psi(s)|^n\,ds
+
\frac{n}{x_j^2}\int_{\Pi_\eps} |\partial_jr(s)||D_n(s)|\,ds
\\
& \ll \frac{1}{|x|^2}\int_{\Pi_\eps} |\Psi(s)|^n\,ds
+
\frac{n}{|x|^2}\int_{\Pi_\eps} |\Psi(s)|^{n-1}\,ds.
\end{align*}
(Here, we used also that $\partial_j\Psi$ is bounded on $\Pi_\eps$ by Lemma~\ref{lemma-chfacts}(iii).)
By Lemma~\ref{lemma-chint},
\[
|E_1| \ll \frac{n}{a_n^d}\frac{1}{|x|^2}
\ll \frac{n}{a_n^d}\frac{\tilde\ell(|x|)}{|x|^\alpha}.
\]

Next, we exploit the modulus of continuity of $rD_n$, writing $h=\pi x_j^{-1}e_j$ (where $e_j\in\R^d$ is the $j$'th unit vector) and
\begin{align*}
|E_2|
& \le \frac{n}{|x_j|}\int_{\R^d} |r(s)-r(s-h)|\,|D_n(s)|\,ds
+\frac{n}{|x_j|}\int_{\R^d}  |r(s-h)|\,|D_n(s)-D_n(s-h)|\,ds
\\ & \ll \frac{n}{|x|^2}\int_{\Pi_{2\eps}}  |\Psi(s)|^{n-1}\,ds
+\frac{n}{|x|}\int_{\Pi_{2\eps}}  |D_n(s)-D_n(s-h)|\,ds.
\end{align*}
Again,
\(
\frac{n}{|x|^2}\int_{\Pi_{2\eps}}  |\Psi(s)|^{n-1}\,ds\ll \frac{n}{a_n^d}\frac{1}{|x|^2}
\ll \frac{n}{a_n^d}\frac{\tilde\ell(|x|)}{|x|^\alpha},
\)
so it remains to estimate
\[
J=\frac{n}{|x|}\int_{\Pi_{2\eps}}  |D_n(s)-D_n(s-h)|\,ds.
\]

Relabel $\{s,s-h\}=\{s_1,s_2\}$ where $|\Psi(s_1)|\le |\Psi(s_2)|$.
Then
$J=J_1+J_2$
where $J_i=\frac{n}{|x|}\int K_i$ for $i=1,2$, and
\[
K_1= |\Psi(s_1)|^{n-1}|\partial_j\Psi(s_1)-\partial_j\Psi(s_2)|,
\qquad
K_2 =|\Psi(s_1)^{n-1}-\Psi(s_2)^{n-1}||\partial_j\Psi(s_2)|.
\]
By Lemma~\ref{lemma-chfacts}(ii),
\[
K_1 \ll 
|x_j|^{1-\alpha}\tilde\ell(|x_j|)|\Psi(s_1)|^{n-1}
\ll |x|^{1-\alpha}\tilde\ell(|x|)|\Psi(s_1)|^{n-1}.
\]
Hence by Lemma~\ref{lemma-chint},
\[
J_1\ll n\frac{\tilde\ell(|x|)}{|x|^\alpha}
\int_{\Pi_{3\eps}} |\Psi(s)^{n-1}|\,ds
\ll
\frac{n}{a_n^d} \frac{\tilde\ell(|x|)}{|x|^\alpha}.
\]

Next, by~\eqref{eq-est},
\begin{align*}
K_2
 \ll n|\partial_j\Psi(s_2)||\Psi(s_1)-\Psi(s_2)| |\Psi(s_2)|^{n-2}.
\end{align*}
By the mean value theorem for vector-valued functions,
there exists $s^*$ between $s_1$ and $s_2$ such that
\[
K_2 \ll n|x_j|^{-1}|\partial_j\Psi(s_2)| |\partial_j\Psi(s^*)|
|\Psi(s_2)|^{n-2} \ll n|x|^{-1}( K_3+K_4 )
\]
where
\[
K_3  = |\partial_j\Psi(s_2)||\partial_j\Psi(s_2)-\partial_j\Psi(s^*)| 
|\Psi(s_2)|^{n-2},  \quad 
K_4  = |\partial_j\Psi(s_2)|^2 |\Psi(s_2)|^{n-2}.
\]
Correspondingly, we have $J_2\ll J_3+J_4=n^2|x|^{-2}(\int K_3+\int K_4)$.

By Lemma~\ref{lemma-chfacts}(ii),(iii),
\(
K_3\ll |x|^{1-\alpha}
\tilde\ell(|x|)|s_2|^{\alpha-1}\tilde\ell(1/|s_2|) |\Psi(s_2)|^{n-2}.
\)
Hence, by Lemma~\ref{lemma-chint},
\begin{align*}
J_3 & \ll n^2|x|^{-(\alpha+1)}\tilde\ell(|x|)\int_{\Pi_{3\eps}} |s|^{\alpha-1}\tilde\ell(1/|s|)
|\Psi(s)|^{n-2}
\,ds
\ll n^2|x|^{-(\alpha+1)}\tilde\ell(|x|) a_n^{-(d+\alpha-1)}\tilde\ell(a_n) 
\\ &  \sim n|x|^{-(\alpha+1)}\tilde\ell(|x|) a_n^{-(d-1)}
 =\frac{n}{a_n^d}\frac{\tilde\ell(|x|)}{|x|^\alpha} \frac{a_n}{|x|}
 \ll\frac{n}{a_n^d}\frac{\tilde\ell(|x|)}{|x|^\alpha},
\end{align*}
where we have used that $a_n\le |x|$.

Finally, by Lemma~\ref{lemma-chfacts}(iii),
\(
K_4  
\ll |s_2|^{2(\alpha-1)}\tilde\ell(1/|s_2|)^2 |\Psi(s_2)|^{n-2}.
\)
Hence, by Lemma~\ref{lemma-chint},
\begin{align*}
J_4
& \ll n^2|x|^{-2}\int_{\Pi_{3\eps}} |s|^{2(\alpha-1)}\tilde\ell(1/|s|)^2 |\Psi(s)|^{n-2}\,ds
 \ll n^2|x|^{-2} a_n^{-d}a_n^{-2(\alpha-1)}\tilde\ell(a_n)^2
\\ &  \sim n|x|^{-2} a_n^{-d}a_n^{2-\alpha}\tilde\ell(a_n)
= \frac{n}{a_n^d} \frac{\tilde\ell(|x|)}{|x|^\alpha}
\frac{a_n^{2-\alpha}\tilde\ell(a_n)}{|x|^{2-\alpha}\tilde\ell(|x|)}
\ll  \frac{n}{a_n^d} \frac{\tilde\ell(|x|)}{|x|^\alpha}.
\end{align*}
Combining the estimates for $J_1$, $J_3$, $J_4$
we obtain that $J\ll \frac{n}{a_n^d} \frac{\tilde\ell(|x|)}{|x|^\alpha}$ completing the proof.
\end{pfof}

\section{Stable LLD for dynamical systems }
\label{sec-dyn}

In this section we show that the previous results can be generalized to a  class of deterministic dynamical systems.

The dynamical systems considered here are obtained by iterating
a measure-preserving map $f:\Lambda\to \Lambda$ 
on a probability space $(\Lambda,\mu)$.
Starting with an initial condition $z$ distributed according to $\mu$, the process $z$, $f z$, $f^2 z=f(fz),\,\ldots$ on $\Lambda^\N$ is stationary,
with distribution $\mu\otimes \delta_{fz}\otimes \delta_{f^2z}\otimes\cdots$.
Also, if $v:\Lambda\to\R^d$ is a measurable observable, then the $\R^d$-valued process $v$, $v\circ f$, $v\circ f^2,\, \ldots$ is stationary.
The stationary distribution $\mu\otimes \delta_{fz}\otimes \delta_{f^2z}\otimes\cdots$ should be compared with the distribution $\mu\otimes\mu\otimes\mu\otimes\cdots$ in the i.i.d.\ case and explains the word ``deterministic'': once the initial condition is specified the future dynamics is uniquely specified with no further randomness. This contrasts with standard probabilistic settings, where fresh randomness is typically injected at each time step. This means that techniques from probability theory have to be reinforced with methods from ``smooth ergodic theory'' with suitable regularity conditions imposed on the map $f$, the measure $\mu$ and the observable $v$.

The abstract setting in this section includes two classes of deterministic dynamical systems: Gibbs-Markov maps with $v$ piecewise H\"older, and more generally AFU maps with $v$ piecewise bounded variation.
(The Gauss map $f(z)=z^{-1}-[z^{-1}]$, $z\in(0,1]$ is a classical example of a Gibbs-Markov map.)
Define $v_n=\sum_{k=0}^{n-1}v\circ f^k$. For $v$ in the domain of an $\alpha$-stable law $Y_\alpha$, $\alpha\in(0,2]$,  it is shown in~\cite{AD01,ADb} that $v_n$ suitably normalised converges in distribution to $Y_\alpha$.
Here, we prove the corresponding local large deviation estimates.

In Subsection~\ref{sec-dynset}, we state our main results for deterministic dynamical systems in an abstract functional-analytic framework.
Subsections~\ref{sec-tech} and~\ref{sec-pfoplld} contain the proof of these results.
In Subsections~\ref{sec-GM} and~\ref{sec-AFU}, we verify that Gibbs-Markov maps and AFU maps are covered by these results.

\subsection{Dynamical systems set up}
\label{sec-dynset}

Let $f:\Lambda\to \Lambda$ be a measure-preserving map
on a probability space $(\Lambda,\mu)$.
Let $v:\Lambda\to\R^d$ be a measurable observation with $\int_\Lambda |v|^2\,d\mu=\infty$.
We fix $\alpha\in(0,1)\cup(1,2]$ throughout and
assume
\begin{itemize}
\item[(H1)] $v$ is nondegenerate and regularly varying with index $-\alpha$ as in 
Definition~\ref{def-reg}.
\end{itemize}
Define $\ell$, $\tilde\ell$ and $a_n$ as in Section~\ref{sec-intro}.
As in the i.i.d.\ case, we set $b_n=0$ for $\alpha<1$ and $b_n=n
\int_\Lambda v\,d\mu$ for $\alpha>1$.

Let $R:L^1\to L^1$ be the transfer operator for $f$ defined via the formula
$\int_{\Lambda} R\phi\,\psi\,d\mu = \int_{\Lambda}\phi\,\psi\circ f\,d\mu$.
Given $s\in\R^d$, define the perturbed operator $R(s):L^1\to L^1$ by $R(s)\phi=R(e^{is\cdot v}\phi)$. 

We assume that there is a Banach
space $\cB\subset L^\infty$ containing constant
functions, with norm $\|\;\|$ satisfying $|\phi|_\infty\le \|\phi\|$ for
$\phi\in\cB$, such that
\begin{itemize}
\item[(H2)] There exist $\eps>0$, $C>0$ such that 
for all $s,h\in \Pi_\eps$, $j=1,\dots,d$,
\begin{itemize}
\item[(i)] 
$\|R(s)\|\le C$ for $\alpha\in(0,2]$ and
$\|\partial_jR(s)\|\le C$ for $\alpha\in(1,2]$.
\item[(ii)] 
 $\|R(s+h)-R(s)\|\le C|h|^{\alpha}\ell(1/|h|)$ for $\alpha\in(0,1)$ and
$\|R( s+h)-R(s)\|\le C|h|$ for $\alpha\in(1,2]$.
\item[(iii)] 
 $\|\partial_jR(s+h)-\partial_jR(s)\|\le C|h|^{\alpha-1}\tilde\ell(1/|h|)$ for $\alpha\in(1,2]$.
\end{itemize}
\end{itemize}

 Since $R(0)=R$ and $\cB$
contains constant functions, $1$ is an eigenvalue of $R(0)$. 
We assume:
\begin{itemize}\item[(H3)]
The eigenvalue $1$ is simple,
and the remainder of the spectrum of $R(0):\cB\to\cB$ is contained in a disk of radius less than $1$.
\end{itemize}
Let $v_n=\sum_{k=0}^{n-1} v\circ f^k$.
Under these hypotheses, we have distributional convergence 
\(
 (v_n-b_n)/a_n\to_d Y_\alpha
\)
to the $d$-dimensional $\alpha$-stable law $Y_\alpha$ in~\eqref{eq-Y} by~\cite{AD01}.

\begin{rmk} Under additional aperiodicity assumptions, local limit theorems are proved in various situations in~\cite{AD01,ADb,MT20}. Our results do not require aperiodicity assumptions, so we do not discuss these issues further.
\end{rmk}

We can now state the main result in the dynamical setting: namely an operator stable LLD.  

\begin{thm} \label{thm-lldGM} 
Let $\alpha\in(0,1)\cup(1,2]$ and assume (H1)--(H3). 
Then there exists a constant $C>0$ such that
\[ |R^n 1_{\{v_n-b_n\in\Pi_1(x)\}}|_\infty
\le C  \dfrac{n}{a_n^d}\, \dfrac{\tilde\ell(|x|)}{1+|x|^\alpha}
\quad\text{for all $n\ge1$, $|x|\in\R^d$}.
\]
\end{thm}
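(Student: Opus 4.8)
The plan is to mirror the i.i.d.\ proof of Theorem~\ref{thm-lldgen}, replacing the scalar characteristic function $\Psi(s)=\E(e^{is\cdot X})$ by the perturbed transfer operator $R(s)$ acting on $\cB$, and replacing $\Psi(s)^n$ by $R(s)^n$ (noting the identity $\int_\Lambda R(s)^n\phi\,d\mu=\int_\Lambda e^{is\cdot v_n}\phi\circ f^n\,d\mu$, and more to the point that $R^n 1_{\{v_n-b_n\in\Pi_1(x)\}}$ can be recovered by Fourier inversion against $R(s)^n$). First I would establish the operator analogue of Lemma~\ref{lemma-chfact1}: by (H3) and standard perturbation theory (Kato), for $s$ near $0$ the operator $R(s):\cB\to\cB$ has a simple leading eigenvalue $\lambda(s)$ with $\lambda(0)=1$ and a spectral decomposition $R(s)^n=\lambda(s)^n P(s)+Q(s)^n$ where $P(s)$ is the spectral projection and $\|Q(s)^n\|\le C\theta^n$ for some $\theta<1$, uniformly on a small cube $\Pi_{3\eps}$. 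The regularity hypotheses (H2) feed directly into the corresponding regularity of $\lambda(s)$, $P(s)$ and, crucially, into the estimate $|\lambda(s)|\le\exp\{-c|s|^\alpha\tilde\ell(1/|s|)\}$ on $\Pi_{3\eps}$ (obtained exactly as in Lemma~\ref{lemma-chfact1}, since $\lambda(s/a_n)^n\to\E(e^{is\cdot Y_\alpha})$ by the distributional convergence). Thus $\|R(s)^n\|\le C\exp\{-cn|s|^\alpha\tilde\ell(1/|s|)\}+C\theta^n$ on $\Pi_{3\eps}$, which is the operator-valued replacement for the input to Lemma~\ref{lemma-chint}; the extra $C\theta^n$ term is harmless since $\theta^n$ decays faster than any of the target bounds.

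Next I would record the operator analogues of Lemma~\ref{lemma-chfacts}. From the spectral decomposition and (H2), one gets $\|R(s+h)^n-R(s)^n\|\ll n|h|^\alpha\ell(1/|h|)(\|R(s)^n\|+\|R(s-h)^n\|)$-type bounds for $\alpha\in(0,1)$ and the matching derivative bounds for $\alpha\in(1,2]$, since $R(s)^n$ is built out of $\lambda(s)$, $P(s)$ and $Q(s)^n$ all of which inherit the relevant Hölder/differentiability moduli from (H2) via perturbation-theoretic formulas (resolvent integrals). In particular the quantity playing the role of $D_n=\Psi^{n-1}\partial_j\Psi$ is $R(s)^{n-1}\partial_jR(s)$, and the modulus-of-continuity estimates for it are assembled from (H2)(i)--(iii) in the same combinatorial pattern as in the i.i.d.\ case. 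Then Lemma~\ref{lemma-chint} applies verbatim (it is purely about scalar integrals of the form $\int|s|^\beta L(1/|s|)e^{-nc|s|^\alpha\tilde\ell(1/|s|)}\,ds$, which now bound $\int\|R(s)^n\|$-type integrals after absorbing the exponentially small $\theta^n$ term).

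Finally I would run the convolution-smoothing argument of Lemmas~\ref{lemma-iidgamma} and the proof of Theorem~\ref{thm-lldgen} at the operator level: choose the same $\gamma$, $r$ with $\hat\gamma$, $r$ supported in $\Pi_\eps$, and show $|R^n 1_{\{v_n-b_n\in\Pi_1(x)\}}|_\infty\ll\big|\int_{\R^d}e^{-is\cdot x}r(s)R(s)^n\,ds\big|$ pointwise (the positivity of $\gamma$ and $R$, i.e.\ $R\phi\ge0$ for $\phi\ge0$, replaces the probabilistic positivity used in Lemma~\ref{lemma-iidgamma}). Then the split into the regimes $n\gg(1+|x|^\alpha)/\tilde\ell(|x|)$ versus $a_n\le|x|$, the integration-by-parts in $s_j$ for $|x_j|=\max_k|x_k|$, and the estimates $I_1,I_2$ (for $\alpha<1$) and $E_1,E_2,J_1,\dots,J_4$ (for $\alpha\in(1,2]$) all carry over with $\Psi$ replaced by $R(s)$ and absolute values by operator norms, using the analogues of Lemmas~\ref{lemma-chfacts} and~\ref{lemma-chint} just established. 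The main obstacle is the first step: verifying that (H2)(i)--(iii) genuinely give the perturbation-theoretic regularity of $s\mapsto\lambda(s)$, $s\mapsto P(s)$ and the decay-plus-regularity of $Q(s)^n$ with moduli of continuity matching those of $R(s)$ — in particular that the eigenvalue estimate $|\lambda(s)|\le e^{-c|s|^\alpha\tilde\ell(1/|s|)}$ holds on a fixed cube (this is where uniform spectral-gap control from (H3) and the Hölder modulus of $R(\cdot)$ from (H2)(ii) must be combined carefully), after which everything else is a routine transcription of Section~\ref{sec-iid}.
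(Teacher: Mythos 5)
Your proposal follows essentially the same route as the paper's proof: a Nagaev-type spectral decomposition $R(s)=\lambda(s)P(s)+Q(s)$ on $\Pi_{3\eps}$ from (H2)--(H3), the eigenvalue bound $|\lambda(s)|\le\exp\{-c|s|^\alpha\tilde\ell(1/|s|)\}$, transfer of the moduli of continuity in (H2) to $\lambda$, $P$, $Q$ by perturbation theory, the operator-level smoothing inequality $R^n1_{\{v_n\in\Pi_1(x)\}}\le\big(\int_{\Pi_\eps}e^{-is\cdot x}r(s)R(s)^n\,ds\big)1$ via positivity of $R$, and then the same two-regime, integration-by-parts scheme as in Section~\ref{sec-iid}. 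Two small points of comparison. First, for the eigenvalue bound the paper does not pass through distributional convergence of $\lambda(s/a_n)^n$; it writes $\lambda(s)=\Psi(s)+V(s)$ with $V(s)=\int(R(s)-R(0))(\zeta(s)-\zeta(0))\,d\mu$ and uses (H2)(ii) to show $V(s)=o(|s|^\alpha\tilde\ell(1/|s|))$, so that Lemma~\ref{lemma-chfact1} applies verbatim; your route also works but needs the extra step relating $\int e^{is\cdot v_n}d\mu$ to $\lambda(s)^n$. Second, and more importantly, because operators do not commute the analogue of $nD_n=\partial_j(\Psi^n)$ is not $nR(s)^{n-1}\partial_jR(s)$; the paper's key move for the hardest term is the decomposition $\partial_j(R(s)^n)=n\lambda(s)^{n-1}\partial_j\lambda(s)P(s)+\lambda(s)^nP'(s)+\partial_j(Q(s)^n)$, whose leading scalar part is estimated exactly like $J_2$ in the i.i.d.\ case (using the sharp bounds on $\partial_j\lambda$), while the $P'$ and $Q$ parts are lower order. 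A telescoping estimate of $\|\partial_j(R(s)^n)-\partial_j(R(s-h)^n)\|$ based only on the Lipschitz bound in (H2)(ii) would produce an extra factor of order $n/|x|$ that cannot be absorbed into the target bound, so this decomposition is genuinely needed; your plan contains the right ingredients (regularity of $\lambda$, $P$, $Q$), but the sentence identifying the analogue of $D_n$ should be corrected accordingly.
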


 A consequence of Theorem~\ref{thm-lldGM} is the usual stable LLD.
\begin{cor}
\label{cor-mixinglld} 
Let $\alpha\in(0,1)\cup(1,2]$ and assume (H1)--(H3).
Then for every $h>0$, there exists $C>0$ such that
\[
\Big|\int_\Lambda \phi\,\psi\circ f^n\,1_{\{v_n-b_n \in \Pi_h(x) \}}\,d\mu\Big|\le C|\phi|_\infty \,|\psi|_1\frac{n}{a_n^d} \, \frac{\tilde\ell(|x|)}{1+|x|^\alpha}
\]
for all $n\ge1$, $x\in\R^d$, $\phi\in L^\infty$, $\psi\in L^1$.

In particular, taking $\phi=\psi=1$, we obtain that
\[
\mu\{v_n-b_n \in \Pi_h(x) \}\le C\frac{n}{a_n^d} \, \frac{\tilde\ell(|x|)}{1+|x|^\alpha}
\quad\text{for all $n\ge1$, $x\in\R^d$.}
\]
\end{cor}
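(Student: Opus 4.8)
The plan is to deduce Corollary~\ref{cor-mixinglld} from the operator stable LLD in Theorem~\ref{thm-lldGM} by unwinding the definition of the transfer operator and applying the duality relation $\int_\Lambda R\phi\,\psi\,d\mu=\int_\Lambda\phi\,\psi\circ f\,d\mu$ iterated $n$ times.

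First I would reduce to the case $\phi,\psi\geq0$. Since $\phi\in L^\infty$ and $\psi\in L^1$, write $\phi=\phi_+-\phi_-$ and $\psi=\psi_+-\psi_-$ with $|\phi_\pm|_\infty\le|\phi|_\infty$ and $|\psi_\pm|_1\le|\psi|_1$, so by linearity and the triangle inequality it suffices to bound each of the four terms; thus assume $\phi\ge0$, $\psi\ge0$. Next, I would observe that $v_n\circ f^n$ is not what appears; rather we want to relate $\int_\Lambda\phi\,(\psi\circ f^n)\,1_{\{v_n-b_n\in\Pi_h(x)\}}\,d\mu$ to $R^n$ applied to the indicator. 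Using $1_{\{v_n-b_n\in\Pi_h(x)\}}$ as a weight together with $\phi$, and the identity $\int_\Lambda g\,(\psi\circ f^n)\,d\mu=\int_\Lambda R^n g\,\psi\,d\mu$ valid for $g\in L^1$ (here $g=\phi\,1_{\{v_n-b_n\in\Pi_h(x)\}}$, which lies in $L^\infty\subset L^1$ since $\mu$ is a probability measure), we get
\[
\int_\Lambda \phi\,(\psi\circ f^n)\,1_{\{v_n-b_n\in\Pi_h(x)\}}\,d\mu
=\int_\Lambda R^n\big(\phi\,1_{\{v_n-b_n\in\Pi_h(x)\}}\big)\,\psi\,d\mu.
\]
Since $0\le\phi\le|\phi|_\infty$ and $R^n$ is a positive operator (being a transfer operator), we have $R^n(\phi\,1_{\{v_n-b_n\in\Pi_h(x)\}})\le|\phi|_\infty\,R^n 1_{\{v_n-b_n\in\Pi_h(x)\}}$ pointwise. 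Combining with $\psi\ge0$ and then pulling the $L^\infty$ bound on $R^n 1_{\{v_n-b_n\in\Pi_h(x)\}}$ out of the integral,
\[
\int_\Lambda \phi\,(\psi\circ f^n)\,1_{\{v_n-b_n\in\Pi_h(x)\}}\,d\mu
\le |\phi|_\infty\,\big|R^n 1_{\{v_n-b_n\in\Pi_h(x)\}}\big|_\infty\int_\Lambda\psi\,d\mu
=|\phi|_\infty\,|\psi|_1\,\big|R^n 1_{\{v_n-b_n\in\Pi_h(x)\}}\big|_\infty.
\]
Now Theorem~\ref{thm-lldGM} is stated for $h=1$, so for general $h>0$ I would cover $\Pi_h(x)$ by a bounded number $M=M(h)$ of unit cubes $\Pi_1(x^{(i)})$ (with centers $x^{(i)}$ satisfying $|x^{(i)}|\ge|x|-C(h)$), giving $1_{\{v_n-b_n\in\Pi_h(x)\}}\le\sum_{i=1}^M 1_{\{v_n-b_n\in\Pi_1(x^{(i)})\}}$, apply Theorem~\ref{thm-lldGM} to each, and use that $\tilde\ell$ is slowly varying together with $1+|x^{(i)}|^\alpha\gg 1+|x|^\alpha$ for $|x|$ large (the small-$|x|$ range being absorbed into the constant). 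This yields $\big|R^n 1_{\{v_n-b_n\in\Pi_h(x)\}}\big|_\infty\ll \frac{n}{a_n^d}\frac{\tilde\ell(|x|)}{1+|x|^\alpha}$, and summing the four sign-components gives the first inequality of the corollary. The second statement follows by taking $\phi=\psi=1$, using $\int_\Lambda R^n g\,d\mu=\int_\Lambda g\,d\mu$ so that the left side becomes exactly $\mu\{v_n-b_n\in\Pi_h(x)\}$.

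The one genuinely delicate point — though still routine — is the passage from $h=1$ to general $h>0$: one must check that the geometric covering can be chosen so that the centers of the covering cubes stay comparable in norm to $x$, and invoke the uniform convergence / Potter-bound properties of the slowly varying function $\tilde\ell$ to replace $\tilde\ell(|x^{(i)}|)$ by $\tilde\ell(|x|)$ up to a multiplicative constant depending only on $h$. Everything else is a direct consequence of positivity of $R$, the duality defining $R$, and the trivial inclusion of indicator functions; in particular no new analytic input beyond Theorem~\ref{thm-lldGM} is needed.
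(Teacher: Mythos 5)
Your proposal is correct and follows essentially the same route as the paper: iterate the duality defining $R$, use positivity of $R$ to replace $\phi$ by $|\phi|_\infty$ inside $R^n$, pull out the $L^\infty$ norm against $|\psi|_1$, and invoke Theorem~\ref{thm-lldGM} after reducing to $h=1$ by covering with unit cubes. The only cosmetic difference is that the paper avoids your decomposition into positive and negative parts by taking absolute values directly and using $|R^n g|\le R^n|g|$.
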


\begin{proof}
As in Section~\ref{subsec-stprlld},
we can suppose without loss of generality that $h=1$.  
Now,
\[
\int_\Lambda \phi\,\psi\circ f^n\,1_{\{v_n-b_n \in \Pi_1(x) \}}\,d\mu
=\int_\Lambda \psi R^n(1_{\{v_n-b_n\in \Pi_1(x)\}}\phi)\,d\mu,
\]
so by positivity of the operator $R$,
\begin{align*}
\Big|\int_\Lambda \phi\,\psi\circ f^n\, &1_{\{v_n-b_n \in \Pi_1(x) \}} \,d\mu\Big|
 \le |R^n(1_{\{v_n-b_n\in \Pi_1(x)\}}\phi)|_\infty |\psi|_1
\\ &  \le |R^n(1_{\{v_n-b_n\in \Pi_1(x)\}}|\phi|_\infty)|_\infty |\psi|_1
= |R^n 1_{\{v_n-b_n\in \Pi_1(x)\}}|_\infty |\phi|_\infty|\psi|_1.
\end{align*}
The result follows by Theorem~\ref{thm-lldGM}.
\end{proof}

%
%
%
The proof of Theorem~\ref{thm-lldGM} takes up the remainder of this section.
We suppose from now on that (H1)--(H3) hold.
As in Section~\ref{sec-iid},
we can suppose without loss of generality that $b_n=0$.
Equivalently,
for $\alpha\in(1,2]$ we can suppose that $\int_\Lambda v\,d\mu=0$

\subsection{Technical lemmas}
\label{sec-tech}

By (H2) and (H3), there exists $\eps>0$ and a continuous family $\lambda(s)$ of simple
eigenvalues of $R(s)$ for $s\in\Pi_{3\eps}$ with
$\lambda(0)=1$.  
The associated spectral projections $P(s)$, $s\in\Pi_{3\eps}$, form a continuous family of bounded linear operators on $\cB$.
Moreover, there is a continuous family of linear operators $Q(s)$ on $\cB$
and constants $C>0$, $\delta_0\in(0,1)$ such that
\begin{align}
\label{eq-sp}
 & R(s)=\lambda(s) P(s)+Q(s)
\quad\text{for $s\in\Pi_{3\eps}$}.
\\
\label{eq-sp2}
 & \|Q(s)^n\|\le C\delta_0^n
\quad\text{for $s\in\Pi_{3\eps}$, $n\ge1$}.
\end{align}
Hence we can shrink $\eps$ so that
\begin{align}
\label{eq-sp3}
\|R(s)^n\|\le C|\lambda(s)|^n
\quad\text{for $s\in\Pi_{3\eps}$, $n\ge1$}.
\end{align}

Let $\zeta(s)=\frac{P(s) 1}{\int P(s) 1\, d\mu}$ be the normalized eigenvector corresponding to $\lambda(s)$.

\begin{lemma} \label{lemma-PQ}
There exists $\eps>0$ such that
the properties of $R(s)$ listed in (H2) are inherited by
$P(s)$, $Q(s)$, $\lambda(s)$ and $\zeta(s)$
for all $s,h\in\Pi_{3\eps}$.
\end{lemma}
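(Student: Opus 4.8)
The plan is to derive the regularity properties of $\lambda(s)$, $P(s)$, $Q(s)$ and $\zeta(s)$ directly from those of $R(s)$ via the standard spectral-perturbation formulas, treating the three bullets of (H2) as statements about H\"older-type moduli of continuity of operator-valued (or scalar-valued) functions. First I would recall the contour-integral representations: for $s\in\Pi_{3\eps}$ with $\eps$ small enough that $\lambda(s)$ is isolated from the rest of the spectrum by a fixed circle $\Gamma$ in the resolvent set of every $R(s)$, one has
\[
P(s)=\frac{1}{2\pi i}\oint_\Gamma (w-R(s))^{-1}\,dw,\qquad
\lambda(s)=\frac{\mathrm{tr}\big(R(s)P(s)\big)}{\mathrm{tr}\,P(s)}=\int_\Lambda R(s)\zeta(s)\,d\mu,
\]
and $Q(s)=R(s)-\lambda(s)P(s)$, $\zeta(s)=P(s)1/\int P(s)1\,d\mu$. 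The resolvent identity
$(w-R(s))^{-1}-(w-R(s'))^{-1}=(w-R(s))^{-1}(R(s)-R(s'))(w-R(s'))^{-1}$,
combined with the uniform bound $\sup_{w\in\Gamma,\,s\in\Pi_{3\eps}}\|(w-R(s))^{-1}\|<\infty$ (which follows from (H2)(i) and (H3) after shrinking $\eps$), immediately transfers any modulus-of-continuity bound on $s\mapsto R(s)$ to $s\mapsto P(s)$, hence to $s\mapsto\zeta(s)$ and $s\mapsto\lambda(s)$, and then to $Q(s)$. This handles (H2)(ii) for all four objects: one gets $\|P(s+h)-P(s)\|\ll|h|^\alpha\ell(1/|h|)$ for $\alpha\in(0,1)$ and $\ll|h|$ for $\alpha\in(1,2]$, and similarly for $Q$, $\lambda$, $\zeta$.

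For the derivative statements (H2)(i),(iii) in the range $\alpha\in(1,2]$, I would differentiate the contour formula: $\partial_jP(s)=\frac{1}{2\pi i}\oint_\Gamma (w-R(s))^{-1}\partial_jR(s)(w-R(s))^{-1}\,dw$, which is a bounded operator by (H2)(i), giving $\|\partial_jP(s)\|\le C$; then $\partial_j\zeta(s)$, $\partial_j\lambda(s)=\int_\Lambda \big(\partial_jR(s)\zeta(s)+R(s)\partial_j\zeta(s)\big)d\mu$ and $\partial_jQ(s)=\partial_jR(s)-(\partial_j\lambda(s))P(s)-\lambda(s)\partial_jP(s)$ are all bounded. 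For the H\"older continuity of $\partial_jP(s)$, I would write $\partial_jP(s+h)-\partial_jP(s)$ as a sum of three terms using the integrand above, in each of which exactly one factor is a difference: two terms contain $(w-R(s+h))^{-1}-(w-R(s))^{-1}$ controlled by (H2)(ii) and hence $\ll|h|$, and one term contains $\partial_jR(s+h)-\partial_jR(s)$ controlled by (H2)(iii) and hence $\ll|h|^{\alpha-1}\tilde\ell(1/|h|)$; since $\alpha-1<1$ and $|h|\ll|h|^{\alpha-1}\tilde\ell(1/|h|)$ for small $|h|$, the dominant term is the (H2)(iii) one, so $\|\partial_jP(s+h)-\partial_jP(s)\|\ll|h|^{\alpha-1}\tilde\ell(1/|h|)$. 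The same bookkeeping — differentiate the defining formula, split each difference of a product into a sum of products of differences, bound each factor by the appropriate part of (H2) or the already-established bounds on $P$, $\lambda$, $\zeta$ — yields (H2)(iii) for $Q(s)$, $\lambda(s)$ and $\zeta(s)$.

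The main obstacle, such as it is, is purely bookkeeping rather than conceptual: one must be careful that $\tilde\ell$ for $\alpha=2$ differs from $\ell$, and that the product rule applied to $\zeta(s)=P(s)1/\int P(s)1\,d\mu$ involves the denominator $\int P(s)1\,d\mu$, which is continuous and bounded away from $0$ near $s=0$ (since $P(0)1=1$) after shrinking $\eps$, so its reciprocal inherits the same moduli of continuity; likewise $\lambda(s)$ stays close to $1$ so no division issues arise there either. One should also note that slowly varying factors like $\ell(1/|h|)$ and $\tilde\ell(1/|h|)$ are, up to constants, monotone enough on a neighbourhood of $0$ that $|h|\ell(1/|h|)\ll|h|$-type comparisons and absorption of lower-order terms go through; this is exactly the kind of estimate already used freely in Lemmas~\ref{lemma-chfact1}--\ref{lemma-chint}. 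With these points noted, the proof is a routine application of analytic perturbation theory, and I would present it by first stating the uniform resolvent bound, then treating $P(s)$, then deducing the statements for $\zeta(s)$, $\lambda(s)$ and $Q(s)$ in turn.
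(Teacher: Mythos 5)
Your argument is correct and is precisely the ``standard consequence of perturbation theory for smooth families of operators'' that the paper invokes in a one-line proof without giving details: contour-integral representation of $P(s)$, the resolvent identity with a uniform resolvent bound on a fixed circle separating $\lambda(s)$ from the rest of the spectrum, and then product-rule bookkeeping for $\zeta(s)$, $\lambda(s)$ and $Q(s)$. Your write-up supplies exactly the details the paper omits, including the correct absorption $|h|\ll|h|^{\alpha-1}\tilde\ell(1/|h|)$ and the non-vanishing of $\int P(s)1\,d\mu$ near $s=0$.
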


\begin{proof} This is a standard consequence of perturbation theory for smooth families of operators. 
\end{proof}

The next result is the analogue of Lemmas~\ref{lemma-chfact1} and~\ref{lemma-chfacts}(iii) for  $\lambda(s)$.
 
\begin{lemma}
\label{lemma-eigv} 
There exist constants $\eps$, $c$, $C>0$ such that
the following hold for all 
\mbox{$s\in\Pi_{3\eps}$},
\begin{itemize}
 \item[(i)] 
$ |\lambda(s)|  \leq \exp\{- c |s|^{\alpha} \tilde\ell(1/|s|)\}$
for $\alpha\in(0,1)\cup(1,2]$.

\item[(ii)] $|\partial_j\lambda(s)|\le C |s|^{\alpha-1}\tilde\ell(|1/|s|)$
 for $\alpha\in(1,2]$, $j=1,\dots,d$.
\end{itemize}
\end{lemma}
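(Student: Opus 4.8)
The plan is to transfer the estimates on the characteristic function $\Psi$ from Lemmas~\ref{lemma-chfact1} and~\ref{lemma-chfacts}(iii) to the leading eigenvalue $\lambda(s)$, using the spectral decomposition~\eqref{eq-sp} and the formula for $\lambda(s)$ in terms of the eigenprojection. The key identity is that $\lambda(s)$ can be recovered as $\lambda(s)=\int_\Lambda R(s)\zeta(s)\,d\mu$, where $\zeta(s)$ is the normalized eigenvector, since $\int_\Lambda R(s)\zeta(s)\,d\mu = \lambda(s)\int_\Lambda \zeta(s)\,d\mu = \lambda(s)$ (using that $\int_\Lambda R\phi\,d\mu=\int_\Lambda\phi\,d\mu$ and $\int_\Lambda\zeta(s)\,d\mu=1$). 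Alternatively, and more useful for (i), I would use $\lambda(s)^n=\int_\Lambda R(s)^n\zeta(s)\,d\mu$ together with the dynamical analogue of the i.i.d.\ argument: the spectral bound~\eqref{eq-sp3} plus the known distributional convergence $a_n^{-1}v_n\to_d Y_\alpha$ under (H1)--(H3).

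For part (i), I would run the argument of Lemma~\ref{lemma-chfact1} verbatim at the operator level. Since $\int_\Lambda R(s)^n\phi\,d\mu=\int_\Lambda e^{is\cdot v_n}\phi\,d\mu$ for $\phi\in\cB$, taking $\phi=1$ gives $\int_\Lambda R(s)^n 1\,d\mu = \E_\mu(e^{is\cdot v_n})$, the characteristic function of $v_n$. The distributional convergence then yields $\int_\Lambda R(u/a_n)^n 1\,d\mu \to \E(e^{iu\cdot Y_\alpha})$ uniformly for $u\in\bbS$, hence by~\eqref{eq-sp} and~\eqref{eq-sp2} (the $Q$-part decays geometrically and $P(s)1\to 1$), $\lambda(u/a_n)^n\to\E(e^{iu\cdot Y_\alpha})$ uniformly in $u\in\bbS$. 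From here the reasoning is identical to Lemma~\ref{lemma-chfact1}: pass to $\log|\lambda(u/a_n)|$, invert $n\sim(t^\alpha\tilde\ell(1/t))^{-1}$, use nondegeneracy and compactness to get $\min_{u\in\bbS}k_u>0$ with $k_u=\int_\bbS|u\cdot\theta|^\alpha\,d\Lambda(\theta)$, and conclude $\log|\lambda(s)|\sim -k_u|s|^\alpha\tilde\ell(1/|s|)$ as $s\to 0$, shrinking $\eps$ so the bound holds with $c=\tfrac12\min_u k_u$.

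For part (ii), I would differentiate the spectral relation. Writing $\lambda(s)=\int_\Lambda R(s)\zeta(s)\,d\mu$ and differentiating in $s_j$ gives $\partial_j\lambda(s)=\int_\Lambda (\partial_jR(s))\zeta(s)\,d\mu+\int_\Lambda R(s)\partial_j\zeta(s)\,d\mu$. For the second term, since $R(s)\zeta(s)=\lambda(s)\zeta(s)$ and $\int_\Lambda\zeta(s)\,d\mu=1$ (so $\int_\Lambda\partial_j\zeta(s)\,d\mu=0$), we get $\int_\Lambda R(s)\partial_j\zeta(s)\,d\mu = \int_\Lambda(R(s)-\lambda(s))\partial_j\zeta(s)\,d\mu + \lambda(s)\int_\Lambda\partial_j\zeta(s)\,d\mu$; the map $\phi\mapsto(R(s)-\lambda(s))\phi$ restricted to $\ker P(s)$ is bounded, and $\partial_j\zeta(s)$ differs from an element of $\ker P(s)$ by $P(s)\partial_j\zeta(s)$, which one shows vanishes (differentiate $P(s)\zeta(s)=\zeta(s)$, normalization). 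Then apply Lemma~\ref{lemma-chfacts}(iii)-type estimate at $s=0$: since $\E_\mu v=0$ we have $\partial_j\lambda(0)=0$, so using Lemma~\ref{lemma-PQ} (which gives $\|\partial_jR(s)-\partial_jR(0)\|\ll|s|^{\alpha-1}\tilde\ell(1/|s|)$ via (H2)(iii)) together with boundedness of $\zeta(s)$ and $\partial_j\zeta(s)$ on $\Pi_{3\eps}$, we obtain $|\partial_j\lambda(s)|=|\partial_j\lambda(s)-\partial_j\lambda(0)|\ll|s|^{\alpha-1}\tilde\ell(1/|s|)$.

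The main obstacle is part (ii): carefully justifying that $P(s)\partial_j\zeta(s)=0$ (equivalently that $\partial_j\zeta(s)\in\ker P(s)$ after the right correction) and that the operator $(R(s)-\lambda(s))|_{\ker P(s)}$ is uniformly invertible/bounded on $\Pi_{3\eps}$, so that the term $\int_\Lambda R(s)\partial_j\zeta(s)\,d\mu$ is controlled and in fact contributes a quantity dominated by $\|\partial_jR(s)\|$-type bounds — this is where smooth perturbation theory (Lemma~\ref{lemma-PQ}) and the vanishing of $\partial_j\lambda(0)$ must be combined precisely. Part (i) should be routine given the i.i.d.\ template and the spectral gap.
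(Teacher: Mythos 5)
Your overall strategy is sound, and part (i) is a correct but genuinely different route from the paper's. You derive the eigenvalue asymptotics from the distributional convergence of the Birkhoff sums: $\int_\Lambda R(s)^n1\,d\mu=\E_\mu(e^{is\cdot v_n})$, the spectral decomposition~\eqref{eq-sp}--\eqref{eq-sp2} isolates $\lambda(u/a_n)^n$, and then you re-run the inversion argument of Lemma~\ref{lemma-chfact1}. This works (the convergence $(v_n-b_n)/a_n\to_d Y_\alpha$ is available from \cite{AD01} under (H1)--(H3)), but the paper's route is shorter and more self-contained: it writes $\lambda(s)=\int R(s)\zeta(s)\,d\mu=\Psi(s)+V(s)$ with $\Psi(s)=\int e^{is\cdot v}\,d\mu$ and $V(s)=\int(R(s)-R(0))(\zeta(s)-\zeta(0))\,d\mu$. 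Since $\Psi$ depends only on the marginal law of $v$, Lemma~\ref{lemma-chfact1} applies to it verbatim, and $V(s)=o(|s|^\alpha\tilde\ell(1/|s|))$ by (H2)(ii) and Lemma~\ref{lemma-PQ} (e.g.\ $|V(s)|\ll|s|^{2\alpha'}$ with $\alpha'\in(\frac12\alpha,\alpha)$ when $\alpha<1$). This avoids both the re-derivation of the inversion step at the eigenvalue level and any reliance on the convergence theorem for $v_n$.

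For part (ii) your skeleton matches the paper's ($\partial_j\lambda(0)=i\int_\Lambda v_j\,d\mu=0$ plus a H\"older modulus of continuity for $\partial_j\lambda$), but the ``main obstacle'' you flag is illusory and you should not leave it unresolved. You do not need $P(s)\partial_j\zeta(s)=0$ nor the invertibility of $(R(s)-\lambda(s))|_{\ker P(s)}$: since $\int_\Lambda R(0)\phi\,d\mu=\int_\Lambda\phi\,d\mu$ and $\int_\Lambda\partial_j\zeta(s)\,d\mu=0$ (from the normalization), the troublesome term collapses to
\[
\int_\Lambda R(s)\,\partial_j\zeta(s)\,d\mu=\int_\Lambda\big(R(s)-R(0)\big)\partial_j\zeta(s)\,d\mu,
\]
which is $O(\|R(s)-R(0)\|\,\|\partial_j\zeta(s)\|)=O(|s|)=O(|s|^{\alpha-1}\tilde\ell(1/|s|))$ by (H2)(ii) and the boundedness of $\partial_j\zeta$ from Lemma~\ref{lemma-PQ}. (The paper sidesteps even this computation by reading off from Lemma~\ref{lemma-PQ} that $\lambda$ itself inherits (H2)(iii), so that $|\partial_j\lambda(s)|=|\partial_j\lambda(s)-\partial_j\lambda(0)|\ll|s|^{\alpha-1}\tilde\ell(1/|s|)$ in one line.) With that correction your argument closes.
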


\begin{proof} 
(i) Write
\begin{align*} 
\lambda(s) & =\int R(s)\zeta(s)\,d\mu=
\int R(s)1\,d\mu+\int(R(s)-R(0))(\zeta(s)-\zeta(0))\,d\mu 
\\ & =
\int_{\Lambda} e^{is\cdot v}\,  d\mu+\int_{\Lambda} (R(s)-R(0))(\zeta(s)-\zeta(0))\, d\mu=\Psi(s)+V(s).
\end{align*}
The estimate for $\Psi$ in
Lemma~\ref{lemma-chfact1}
is unchanged (since the distribution of $v$ is given by (H1))
so it suffices to verify that the contributions from $V$ are negligible. 

For $\alpha\in(0,1)$, we choose $\alpha'\in(\frac12\alpha,\alpha)$.
Then $\|R(s)-R(0)\|\ll |s|^{\alpha'}$ by (H2) and
$\|\zeta(s)-\zeta(0)\|\ll |s|^{\alpha'}$ by Lemma~\ref{lemma-PQ}.
Since $\cB\subset L^\infty$,
\[
|V(s)|\ll \|R(s)-R(0)\|\|\zeta(s)-\zeta(0)\|
\ll |s|^{2\alpha'}=o(|s|^\alpha\ell(1/|s|)).
\]
Similarly, $|V(s)|\ll |s|^2=o(|s|^\alpha\tilde\ell(1/|s|))$ when $\alpha\in(1,2]$.
This completes the proof of part~(i).
\\[.75ex]
(ii) 
By the formula in part (i), $\partial_j\lambda(0)=i\int_\Lambda v_j\,d\mu=0$. Hence $\partial_j\lambda(s)=\partial_j\lambda(s)-\partial_j\lambda(0)$ so the estimate follows from
Lemma~\ref{lemma-PQ}.
\end{proof} 

From now on, $\eps>0$ is fixed in accordance with the above properties.

\begin{cor} \label{cor-eigvint}
Let $L:(0,\infty)\to(0,\infty)$ be a continuous slowly varying function.
For all $c>0$, $\beta\ge0$, there exists $C>0$ such that
 for all $n\ge1$,
\[
\int_{\Pi_{3\eps}} |s|^\beta L(1/|s|)|\lambda(s)|^n \,ds\le C\frac{L(a_n)}{ a_n^{d+\beta}}.
\]
\end{cor}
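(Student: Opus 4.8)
The plan is to reduce Corollary~\ref{cor-eigvint} directly to Lemma~\ref{lemma-chint} via the pointwise bound on $|\lambda(s)|$ supplied by Lemma~\ref{lemma-eigv}(i). Indeed, Lemma~\ref{lemma-eigv}(i) gives a constant $c>0$ and $\eps>0$ such that $|\lambda(s)|\le\exp\{-c|s|^\alpha\tilde\ell(1/|s|)\}$ for all $s\in\Pi_{3\eps}$, hence $|\lambda(s)|^n\le\exp\{-nc|s|^\alpha\tilde\ell(1/|s|)\}$. Therefore
\[
\int_{\Pi_{3\eps}}|s|^\beta L(1/|s|)|\lambda(s)|^n\,ds
\le\int_{\R^d}|s|^\beta L(1/|s|)\exp\{-nc|s|^\alpha\tilde\ell(1/|s|)\}\,ds,
\]
and the right-hand side is bounded by $C L(a_n)a_n^{-(d+\beta)}$ by the first displayed estimate in Lemma~\ref{lemma-chint} (with the same $c$, $\beta$, and slowly varying function $L$). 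This is essentially a one-line argument, exactly parallel to how the ``in particular'' clause of Lemma~\ref{lemma-chint} follows from Lemma~\ref{lemma-chfact1}; I would just cite both results and write out the two inequalities above.

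The only points worth checking are bookkeeping ones. First, the constant $\eps$ in Corollary~\ref{cor-eigvint} is the one fixed ``from now on'' after Lemma~\ref{lemma-eigv}, which was chosen so that all the stated spectral and eigenvalue properties hold on $\Pi_{3\eps}$; so the hypothesis of Lemma~\ref{lemma-eigv}(i) is available on the full domain of integration $\Pi_{3\eps}$. Second, the passage from $|\lambda(s)|$ to $|\lambda(s)|^n$ is immediate since $|\lambda(s)|\le 1$ need not even be invoked — raising a positive quantity bounded by $e^{-c|s|^\alpha\tilde\ell(1/|s|)}$ to the $n$th power gives $e^{-nc|s|^\alpha\tilde\ell(1/|s|)}$ directly. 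Third, $L$ here is an arbitrary continuous slowly varying function, matching the generality of Lemma~\ref{lemma-chint}, so no extra regularity assumption is needed.

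There is essentially no obstacle: the real work was already done in Lemma~\ref{lemma-eigv}(i) (controlling the leading eigenvalue, where the contribution $V(s)$ had to be shown negligible) and in Lemma~\ref{lemma-chint} (the Potter-bound change-of-variables computation). Corollary~\ref{cor-eigvint} is purely the synthesis of these two, and its role is to make the subsequent spectral-decomposition argument for Theorem~\ref{thm-lldGM} run in exact analogy with the i.i.d.\ proof of Section~\ref{subsec-stprlld}, with $\lambda(s)^n$ playing the role that $\Psi(s)^n$ played there. Accordingly I would keep the proof to two or three lines.
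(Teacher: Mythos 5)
Your proposal is correct and coincides with the paper's own (one-line) proof: the paper also derives the corollary by combining the pointwise bound of Lemma~\ref{lemma-eigv}(i) with the first displayed estimate of Lemma~\ref{lemma-chint}. Nothing is missing.
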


\begin{proof}
This follows from Lemmas~\ref{lemma-chint} and~\ref{lemma-eigv}(i).
\end{proof}

We require the following estimates on 
the derivatives of $R(s)^n$ and $Q(s)^n$.

\begin{lemma}\label{lemma-op} 
Let $\alpha\in(1,2]$ and fix $\delta_1\in (\delta_0, 1)$.  Then
there exists $C>0$ such that 
for all $s,s+h\in\Pi_{3\eps}$, $j=1,\dots,d$,
\[
\|\partial_j(R(s)^n)\|\le C n|\lambda(s)|^{n-1}
\quad\text{and}\quad
\|\partial_j(Q(s+h)^{n})-\partial_j(Q(s)^{n})\|\le C\delta_1^n |h|^{\alpha-1}\tilde\ell(1/|h|).
\]
\end{lemma}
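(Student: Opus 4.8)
The plan is to differentiate the spectral decomposition and iterate. First I would establish the estimate for $\partial_j(R(s)^n)$. Write $R(s)^n$ as a telescoping product and use the product rule: $\partial_j(R(s)^n)=\sum_{k=0}^{n-1}R(s)^k(\partial_jR(s))R(s)^{n-1-k}$. Applying the submultiplicative bound $\|R(s)^m\|\le C|\lambda(s)|^m$ from~\eqref{eq-sp3} to each of the two blocks, together with $\|\partial_jR(s)\|\le C$ from (H2)(i), gives $\|\partial_j(R(s)^n)\|\ll \sum_{k=0}^{n-1}|\lambda(s)|^{n-1}\ll n|\lambda(s)|^{n-1}$, as required. (One must be slightly careful at small $n$, but the constant absorbs this.)

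For the $Q(s)^n$ estimate I would work with $\partial_j(Q(s)^n)=\sum_{k=0}^{n-1}Q(s)^k(\partial_jQ(s))Q(s)^{n-1-k}$ and its analogue at $s+h$, then subtract and split each term so that the difference falls on exactly one factor at a time. The three types of contributions are: differences $Q(s+h)^k-Q(s)^k$, the difference $\partial_jQ(s+h)-\partial_jQ(s)$, and remaining undifferentiated blocks $Q^{n-1-k}$. For the undifferentiated blocks use~\eqref{eq-sp2}, $\|Q(s)^m\|\le C\delta_0^m$; for the single derivative factor use $\|\partial_jQ(s)\|\le C$ from Lemma~\ref{lemma-PQ}; for the increment $\partial_jQ(s+h)-\partial_jQ(s)$ use the Hölder modulus $\ll|h|^{\alpha-1}\tilde\ell(1/|h|)$ also from Lemma~\ref{lemma-PQ}; and for $Q(s+h)^k-Q(s)^k$ telescope once more into $k$ terms, each bounded by $\delta_0^{k-1}\|Q(s+h)-Q(s)\|\ll \delta_0^{k-1}|h|$ (using (H2)(ii) for $\alpha\in(1,2]$), giving $\|Q(s+h)^k-Q(s)^k\|\ll k\delta_0^{k-1}|h|$. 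Summing over $k$ (and over the outer index), the powers $\delta_0^n$ multiplied by polynomial factors in $n$ are absorbed into $\delta_1^n$ since $\delta_1>\delta_0$, and the worst modulus among $|h|$ and $|h|^{\alpha-1}\tilde\ell(1/|h|)$ is the latter (for $h$ small and $\alpha<2$), so we obtain $\|\partial_j(Q(s+h)^n)-\partial_j(Q(s)^n)\|\ll \delta_1^n|h|^{\alpha-1}\tilde\ell(1/|h|)$.

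The main obstacle is bookkeeping the double telescoping cleanly: after the product rule there are $n$ outer terms, and inside each the difference $Q(s+h)^k-Q(s)^k$ expands into another sum, so one must verify that the resulting $O(n^2)$ scalar factor times $\delta_0^n$ is still $O(\delta_1^n)$ uniformly — this is where the slack $\delta_1>\delta_0$ is essential — and that every operator-norm bound used is indeed uniform in $s\in\Pi_{3\eps}$, which is guaranteed by Lemma~\ref{lemma-PQ} and~\eqref{eq-sp}--\eqref{eq-sp3}. A minor point is to confirm that $Q(s)$ commutes appropriately with nothing problematic — it does not commute with $P(s)$, but within $Q(s)^n$ we only ever multiply $Q$'s and $\partial_jQ$'s, so no spurious cross terms arise. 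Once the combinatorics is organized, each individual estimate is routine.
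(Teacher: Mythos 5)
Your proposal is correct and follows essentially the same route as the paper's proof: the product rule together with \eqref{eq-sp3} and (H2)(i) for the bound on $\partial_j(R(s)^n)$, and a double telescoping of $\partial_j(Q(s)^n)$ in which the increment is placed on one factor at a time, estimated via \eqref{eq-sp2}, Lemma~\ref{lemma-PQ} and the Lipschitz bound on $Q(s+h)-Q(s)$. The only cosmetic difference is that the paper absorbs the polynomial-in-$n$ factors in two stages through an intermediate $\delta_2\in(\delta_0,\delta_1)$, whereas you absorb the $O(n^2)$ factor directly into $\delta_1^n$; both are valid.
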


\begin{proof}
We start from
\(
\partial_j(R(s)^n)=\sum_{k=0}^{n-1}R(s)^k\partial_jR(s)R(s)^{n-k-1}.
\)
By~\eqref{eq-sp3} and (H2)(i)
\[
\|\partial_j(R(s)^n)\|
\ll \sum_{k=0}^{n-1}|\lambda(s)|^k|\lambda(s)|^{n-k-1}
=n|\lambda(s)|^{n-1}.
\]

Next, fix $\delta_2\in(\delta_0,\delta_1)$.
By~\eqref{eq-sp2} and Lemma~\ref{lemma-PQ},
\begin{align} \label{eq-op} \nonumber
\|Q(s+h)^n-Q(s)^n\| & \le \sum_{k=0}^{n-1} \|Q(s+h)^k\|\,\|Q(s+h)-Q(s)\|\,\|Q(s)^{n-k-1}\|
\\ & \ll |h|\sum_{k=0}^{n-1}\delta_0^n \ll \delta_2^n|h|.
\end{align}
Let $k,m \ge0$ with $k+m =n-1$.  Then
\begin{align*}
(Q^k\partial_jQQ^m )(s+h)  -(Q^k\partial_jQQ^m )(s)
& =\big(Q(s+h)^k-Q(s)^k\big)\partial_jQ(s+h)Q(s+h)^m 
\\  & \qquad +Q(s)^k\big(\partial_jQ(s+h)-\partial_jQ(s)\big)Q(s+h)^m 
\\ & \qquad +Q(s)^k\partial_jQ(s)\big(Q(s+h)^m -Q(s)^m \big)
\end{align*}
so by~\eqref{eq-sp2},~\eqref{eq-op} and Lemma~\ref{lemma-PQ},
\[
\|(Q^k\partial_jQQ^m )(s+h)  -(Q^k\partial_jQQ^m )(s)\|  \ll \delta_2^{n-1}|h|+
\delta_2^{n-1}|h|^{\alpha-1}\tilde\ell(1/|h|)
\ll \delta_2^n|h|^{\alpha-1}\tilde\ell(1/|h|).
\]
Substituting into $\partial_j(Q(s)^{n})=\sum_{k=0}^{n-1} (Q^k\partial_jQQ^{n-k-1})(s)$
we obtain
$\|\partial_j(Q(s+h)^{n})- (\partial_jQ(s)^{n})\|\ll 
n\delta_2^n|h|^{\alpha-1}\tilde\ell(1/|h|)
\ll \delta_1^n|h|^{\alpha-1}\tilde\ell(1/|h|)$ as required.
\end{proof}

\begin{cor} \label{cor-Rn}
(i) Let $\alpha\in(0,1)\cup(1,2]$.
There exists $C>0$ such that for all $|h|\le\eps$,
\[
\int_{\Pi_{2\eps}} \|R(s)^k\|\|R(s+h)^m \|\,ds  \le C a_n^{-d}
\quad\text{for all $k,m \ge0$, $n\ge1$ with $k+m =n$}.
\]

\vspace{1ex} \noindent
(ii) Let $\alpha\in(1,2]$.
There exists $C>0$ such that 
\[
\int_{\Pi_{2\eps}} \|\partial_j(R(s)^n)\|\,ds  \le C na_n^{-d}
\quad\text{for all $n\ge1$, $j=1,\dots,d$}.
\]
\end{cor}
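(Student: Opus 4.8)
The plan is to reduce both estimates to the eigenvalue bound in Lemma~\ref{lemma-eigv}(i) together with Corollary~\ref{cor-eigvint}, exploiting the spectral decomposition~\eqref{eq-sp}--\eqref{eq-sp3}. For part~(i), I would start from~\eqref{eq-sp3}, which gives $\|R(s)^k\|\ll|\lambda(s)|^k$ and $\|R(s+h)^m\|\ll|\lambda(s+h)|^m$ uniformly for $s,s+h\in\Pi_{3\eps}$. Since $|h|\le\eps$ and the integration is over $\Pi_{2\eps}$, both $s$ and $s+h$ lie in $\Pi_{3\eps}$, so these bounds apply. The product is then $\ll|\lambda(s)|^k|\lambda(s+h)|^m$. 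Using Lemma~\ref{lemma-eigv}(i) for each factor and the elementary inequality $xy\le \tfrac12(x^2+y^2)$ — or more directly $|\lambda(s)|^k|\lambda(s+h)|^m\le \max\{|\lambda(s)|,|\lambda(s+h)|\}^n$, then bounding the max by the square root of the product to symmetrize — one gets an integrand controlled by $\exp\{-cn|s|^\alpha\tilde\ell(1/|s|)\}$ plus the analogous expression in $s+h$; after the translation $s\mapsto s-h$ in the second term (legitimate since the enlarged domain $\Pi_{3\eps}$ absorbs the shift) both are of the form handled by Lemma~\ref{lemma-chint} with $\beta=0$, $L\equiv1$, yielding the bound $Ca_n^{-d}$ uniformly in the split $k+m=n$.

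For part~(ii), I would combine the first estimate of Lemma~\ref{lemma-op}, namely $\|\partial_j(R(s)^n)\|\ll n|\lambda(s)|^{n-1}$, with Corollary~\ref{cor-eigvint} applied with $\beta=0$, $L\equiv1$, and exponent $n-1$ in place of $n$. Since $a_{n-1}\sim a_n$ (the sequence $a_n$ is regularly varying of index $1/\alpha$), the resulting bound $na_{n-1}^{-d}$ is $\ll na_n^{-d}$, which is exactly what is claimed. Here one must be slightly careful that Corollary~\ref{cor-eigvint} is stated for the exponent $n$, but re-running Lemma~\ref{lemma-chint} with $n$ replaced by $n-1$ is immediate, or one simply notes $|\lambda(s)|^{n-1}\le C\delta_0^{-1}|\lambda(s)|^n$ is false in general, so the cleaner route is to invoke the $n-1$ version directly and use $a_{n-1}\sim a_n$.

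The only mild obstacle is the uniformity in the decomposition $k+m=n$ in part~(i): the constant must not depend on how $n$ is split. This is handled by the symmetrization step, which replaces $|\lambda(s)|^k|\lambda(s+h)|^m$ by a sum of two terms each depending only on $n$ (not on $k$ or $m$ separately), after which the split disappears entirely and Lemma~\ref{lemma-chint} delivers a constant depending only on $c$, $\alpha$, $d$, $\eps$. Everything else is a routine application of the technical lemmas already established, so the proof is short.
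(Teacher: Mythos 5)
Your proposal is correct and follows essentially the same route as the paper: part (i) via \eqref{eq-sp3} and the chain $|\lambda(s)|^k|\lambda(s+h)|^m\le\max\{|\lambda(s)|,|\lambda(s+h)|\}^n\le|\lambda(s)|^n+|\lambda(s+h)|^n$ followed by Corollary~\ref{cor-eigvint}, and part (ii) via the first estimate of Lemma~\ref{lemma-op} (your care about $n-1$ versus $n$ and $a_{n-1}\sim a_n$ is fine). The one quibble is your phrase ``bounding the max by the square root of the product'', which goes the wrong way ($\max\{a,b\}\ge\sqrt{ab}$); the correct and trivial symmetrization is $\max\{a,b\}^n\le a^n+b^n$, which is what the paper uses and what your final displayed conclusion in fact amounts to.
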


\begin{proof}
(i) 
By~\eqref{eq-sp3},
\[
\|R(s)^k\|\,\|R(s+h)^m \|  \ll |\lambda(s)|^k|\lambda(s+h)|^m \ll
 |\lambda(s)|^n+|\lambda(s+h)|^n.
\]
Also, by Lemma~\ref{lemma-op},
\(
\|\partial_j(R(s)^n)\| \ll n |\lambda(s)|^{n-1}.
\)
Hence both parts follow from Corollary~\ref{cor-eigvint},
\end{proof}

\subsection{Proof of the operator stable LLD}
\label{sec-pfoplld}

In this subsection, we prove Theorem~\ref{thm-lldGM}.
Define $r:\R\to\R$ as in Section~\ref{subsec-stprlld}.
Recall that $r$ is $C^2$, even, and supported in $\Pi_\eps$.

\begin{lemma} \label{lemma-gamma}
\(
1_{\{v_n\in \Pi_1(x)\}}\le
\int_{\Pi_\eps} e^{-i s\cdot x} r(s)  e^{is\cdot v_n}  \, ds
\)
for $n\ge1$, $x\in\R^d$. 
\end{lemma}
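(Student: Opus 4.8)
The plan is to imitate exactly the argument of Lemma~\ref{lemma-iidgamma}, replacing probabilistic expectations by pointwise evaluation of the observable $v_n$. The key observation is that Lemma~\ref{lemma-iidgamma} never really used independence: it only used the Fourier inversion formula for $\gamma$, the Fubini-type manipulation, and the facts $\gamma_0\ge0$, $\gamma_0|_{[-2,2]}\ge1$. Here the analogous statement is a deterministic pointwise inequality, so I would first fix $n\ge1$ and $x\in\R^d$ and, for each point $z\in\Lambda$, apply the scalar/vector Fourier inversion~\eqref{eq-inv} to write
\[
\gamma(y)=\frac{1}{(2\pi)^d}\int_{\Pi_\eps} e^{-is\cdot y}\hat\gamma(s)\,ds .
\]
Evaluating at $y=v_n(z)-y'$ and integrating $y'$ over $\Pi_1(x)$, the same computation as in~\eqref{eq-gamma} (now with the single point mass at $v_n(z)$ in place of the measure $dF_n$) gives
\[
\int_{\Pi_1(x)}\gamma\big(v_n(z)-y'\big)\,dy'
=2^d\int_{\Pi_\eps} e^{-is\cdot x} r(s)\, e^{is\cdot v_n(z)}\,ds .
\]

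Next I would use $\gamma_0\ge0$ and $\gamma_0|_{[-2,2]}\ge1$ to bound the left-hand side from below by $2^d\,1_{\{v_n(z)\in\Pi_1(x)\}}$: indeed if $v_n(z)\in\Pi_1(x)$ then for every $y'\in\Pi_1(x)$ we have $|v_n(z)_j-y'_j|\le 2$ for each $j$, so $\gamma(v_n(z)-y')\ge1$, and integrating over $\Pi_1(x)$ (which has volume $2^d$) yields the bound; if $v_n(z)\notin\Pi_1(x)$ the left side is simply $\ge0$. Dividing by $2^d$ gives the claimed inequality pointwise in $z$, hence as an inequality of functions on $\Lambda$.

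The only genuinely substantive point — and really the only place any care is needed — is the Fubini step: one must check that $\int_{\Pi_1(x)}\int_{\Pi_\eps} e^{-is\cdot(v_n(z)-y')}|\hat\gamma(s)|\,ds\,dy'<\infty$ so that the order of integration in~\eqref{eq-gamma} may be interchanged. This is immediate since $\Pi_1(x)$ and $\Pi_\eps$ are bounded and $\hat\gamma=\widehat{\gamma_0}(s_1)\cdots\widehat{\gamma_0}(s_d)$ is bounded with compact support. Everything else is a verbatim repetition of Lemma~\ref{lemma-iidgamma} with the expectation over $S_n$ replaced by evaluation at $v_n(z)$, so I expect no real obstacle; the lemma is essentially a pointwise reformulation of the earlier one.
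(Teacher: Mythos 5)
Your proposal is correct and follows essentially the same route as the paper: the paper likewise bounds $1_{\{v_n\in\Pi_1(x)\}}$ by $2^{-d}\int_{\Pi_1(x)}\gamma(y-v_n)\,dy$ using $\gamma\ge 0$ and $\gamma|_{\Pi_2(0)}\ge 1$, then applies the inversion formula~\eqref{eq-inv} and Fubini pointwise, exactly as in Lemma~\ref{lemma-iidgamma} with $dF_n$ replaced by the point mass at $v_n(z)$. (The only cosmetic point is a sign in the exponentials, which is harmless since $r$ and $\gamma$ are even.)
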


\begin{proof}
Define $\gamma:\R^d\to[0,\infty)$ as in Section~\ref{subsec-stprlld}.
Since $\gamma\ge0$ and $\gamma|_{\Pi_2(0)}\ge1$,
\begin{align*}
1_{\{v_n\in \Pi_1(x)\}}
 & =\frac{1}{2^d} \int_{\Pi_1(x)} 1_{\{v_n\in \Pi_1(x)\}}\,dy
 \\ & \le \frac{1}{2^d}\int_{\Pi_1(x)} 1_{\{v_n\in \Pi_2(y)\}}\,dy
\le \frac{1}{2^d} \int_{\Pi_1(x)} \gamma(y-v_n)\,dy.
\end{align*}
Using the Fourier inversion formula~\eqref{eq-inv},
\begin{align*}
1_{\{v_n\in \Pi_1(x)\}} & 
\le \frac{1}{(4\pi)^d}\int_{\Pi_\eps} \Big(\int_{\Pi_1(x)} e^{-is\cdot y}\,dy\Big) \hat\gamma(s)e^{is\cdot v_n}\,ds
= \int_{\Pi_\eps} e^{-is\cdot x}r(s)e^{is\cdot v_n}\,ds
\end{align*}
by Fubini.
\end{proof}

\begin{pfof}{Theorem~\ref{thm-lldGM}}
By Lemma~\ref{lemma-gamma} and positivity of $R$,
\[
 R^n 1_{\{v_n\in \Pi_1(x)\}}
 \le 
\int_{\Pi_\eps} e^{-is\cdot x}r(s)R^n e^{is\cdot v_n}\,ds=
 A_{n,x}1
\]
where
\[
A_{n,x}=\int_{\Pi_\eps} e^{-is\cdot x}r(s)R(s)^n\,ds.
\]
Since $1\in\cB$,
\[
|R^n 1_{\{v_n\in \Pi_1(x)\}}|_\infty
\le |A_{n,x}1|_\infty
\le \|A_{n,x}1\| \le \|A_{n,x}\|\,\|1\|\ll \|A_{n,x}\|.
\]
Hence it suffices to estimate $\|A_{n,x}\|$.

Since $r$ is bounded and supported in $\Pi_\eps$, it follows that
$\|A_{n,x}\|\ll \int_{\Pi_\eps}\|R(s)\|^n\,ds$.
By~\eqref{eq-sp3} and Corollary~\ref{cor-eigvint}, 
\[
\|A_{n,x}\|\ll \int_{\Pi_\eps}|\lambda(s)|^n\,ds\ll a_n^{-d}.
\]
Hence, for $n\gg (1+|x|^\alpha)/\tilde{\ell}(|x|)$, we obtain the required estimate
$\|A_{n,x}\|\ll \frac{n}{a_n^d}\, \frac{\tilde\ell(|x|)}{1+|x|^\alpha}$.

As in the proof of Theorem~\ref{thm-lldgen}, it remains to prove that 
$\|A_{n,x}\|\ll \frac{n}{a_n^d}\frac{\tilde\ell(|x|)}{|x|^\alpha}$
for $a_n\le|x|$, 
$|x|\ge \pi/\eps$.

\vspace{2ex}
\noindent {\bf{The case $\alpha\in (0,1)$.}}  
Let $h=\pi x/|x|^2$.
The same modulus of continuity argument as in the i.i.d.\ case
(cf.\ \eqref{eq-mod}) yields
\(
\|A_{n,x}\|\le I_1+I_2
\)
where
\begin{align*}
I_1 & = \int_{\R^d} |r(s)-r(s-h)|\,\|R(s)^n\|\,ds
\ll |x|^{-1}\int_{\Pi_{2\eps}} \|R(s)^n\|\,ds, \\
I_2 & =\int_{\R^d} |r(s-h)|\,\|R(s)^n-R(s-h)^n\|\,ds
\ll \int_{\Pi_{2\eps}} \|R(s)^n-R(s-h)^n\|\,ds.
\end{align*}
By Corollary~\ref{cor-Rn}(i),
$I_1\ll |x|^{-1}a_n^{-d}\ll \frac{n}{a_n^d}\frac{\ell(|x|)}{|x|^\alpha}$.

Next, 
\[
\|R(s)^n-R(s-h)^n\|\le \sum_{k=0}^{n-1} \|R(s)^k\|\,\|R(s)-R(s-h)\|\,\|R(s-h)^{n-k-1}\|,
\]
so by (H2)(ii) and Corollary~\ref{cor-Rn}(i),
\[
I_2 \ll \frac{\ell(|x|)}{|x|^\alpha}\sum_{k=0}^{n-1}\int_{\Pi_{2\eps}} \|R(s)^k\|\,\|R(s-h)^{n-k-1}\|\,ds 
\ll \frac{n}{a_n^d}\frac{\ell(|x|)}{|x|^\alpha}.
\]

\vspace{1ex}
\noindent {\bf{The case $\alpha\in (1,2]$.}}  
Choose $j$ so that $|x_j|=\max\{|x_1|,\dots,|x_d|\}$.
Integrating by parts, $A_{n,x}=E_1+E_2$ where
\[
E_1=\frac{1}{ix_j} \int_{\R^d} e^{-is\cdot x}\partial_jr(s)R(s)^n\,ds,
\qquad
E_2=\frac{1}{ix_j} \int_{\R^d} e^{-is\cdot x}r(s)\partial_j(R(s)^n)\,ds.
\]
Integrating by parts once more and using that $r$ is $C^2$ and supported
in $\Pi_\eps$,
\begin{align*}
\|E_1\| & \le \frac{1}{x_j^2}\int_{\R^d} |\partial_j^2r(s)|\,\|R(s)^n\|\,ds
+
\frac{1}{x_j^2}\int_{\R^d} |\partial_jr(s)|\,\|\partial_j(R(s)^n)\|\,ds
\\ & \ll \frac{1}{x_j^2}\int_{\Pi_\eps} \|R(s)^n\|\,ds
+
 \frac{1}{x_j^2}\int_{\Pi_\eps} \|\partial_j(R(s)^n)\|\,ds.
\end{align*}
By Corollary~\ref{cor-Rn},
\[
\|E_1\| 
\ll \frac{1}{a_n^d}\frac{1}{|x|^2}+
 \frac{n}{a_n^d}\frac{1}{|x|^2}
\ll \frac{n}{a_n^d}\frac{\tilde\ell(|x|)}{|x|^\alpha}.
\]

Next, we exploit the modulus of continuity of $r\partial_j(R^n)$, writing
$h=\pi x_j^{-1}e_j$ and 
\begin{align*}
\|E_2\|
& \ll \frac{1}{|x_j|}\int_{\R^d} |r(s)-r(s-h)|\,\|\partial_j(R(s)^n)\|\,ds
\\ & \qquad\qquad\qquad +\frac{1}{|x_j|}\int_{\R^d} |r(s-h)|\,\|\partial_j(R(s)^n)-\partial_j(R(s-h)^n)\|\,ds
\\ & \ll \frac{1}{|x|^2}\int_{\Pi_{2\eps}} \|\partial_j(R(s)^n)\|\,ds
+\frac{1}{|x|}\int_{\Pi_{2\eps}} \|\partial_j(R(s)^n)-\partial_j(R(s-h)^n)\|\,ds.
\end{align*}
Again $\frac{1}{|x|^2}\int_{\Pi_{2\eps}} \|\partial_j(R(s)^n)\|\,ds\ll  \frac{n}{a_n^d}\frac{1}{|x|^2}
\ll \frac{n}{a_n^d}\frac{\tilde\ell(|x|)}{|x|^\alpha}$ so
it remains to estimate
\[
J=\frac{1}{|x|}\int_{\Pi_{2\eps}} \|\partial_j(R(s)^n)-\partial_j(R(s-h)^n)\|\,ds.
\]
By~\eqref{eq-sp},
\[\partial_j
(R(s)^n)=n\lambda(s)^{n-1}\partial_j\lambda(s)P(s)+\lambda(s)^nP'(s)+\partial_j(Q(s)^n).
\]
Relabel $\{s,s-h\}=\{s_1,s_2\}$ where $|\lambda(s_1)|\le|\lambda(s_2)|$.  Then
$J\le F_1+\dots+F_6$ where
\begin{align*}
F_1 & = \frac{n}{|x|} \int_{\Pi_{2\eps}} 
|\lambda(s_1)^{n-1}-\lambda(s_2)^{n-1}|\,
|\partial_j\lambda(s_2)|\,\|P(s_2)\|\,ds,
\\ F_2  & = \frac{n}{|x|} \int_{\Pi_{2\eps}} |\lambda(s_2)|^{n-1}
|\partial_j\lambda(s_1)-\partial_j\lambda(s_2)|
\|P(s_2)\|\,ds,
\\ F_3  & = \frac{n}{|x|} \int_{\Pi_{2\eps}} 
|\lambda(s_2)|^{n-1}|\partial_j\lambda(s_2)|\|P(s_1)-P(s_2)\|
\,ds,
\\ F_4 & = \frac{1}{|x|}\int_{\Pi_{2\eps}} |\lambda(s_1)^n-\lambda(s_2)^n|\|P'(s_2)\|\,ds,
\\ F_5 & = \frac{1}{|x|}\int_{\Pi_{2\eps}} |\lambda(s_2)|^n\|P'(s_1)-P'(s_2)\|\,ds,
\\ F_6  & = \frac{1}{|x|}\int_{\Pi_{2\eps}} \|\partial_j(Q^n)(s_1)-\partial_j(Q^n)(s_2)\|\,ds.
\end{align*}

The hardest term $F_1$ is estimated in the same way as $J_2$ in the proof of Theorem~\ref{thm-lldgen} so we write the calculation without the justifications:
\begin{align*}
F_1  & \ll \frac{n^2}{|x|} \int_{\Pi_{2\eps}} |\lambda(s_1)-\lambda(s_2)|\,|\lambda(s_2)|^{n-2}|\partial_j\lambda(s_2)|\,ds
\\ & \ll \frac{n^2}{|x|^2} \int_{\Pi_{2\eps}} |\partial_j\lambda(s^*)|\,
|\partial_j\lambda(s_2)|\, |\lambda(s_2)|^{n-2} \,ds
\\ & \ll  \frac{n^2}{|x|^2}\int_{\Pi_{2\eps}} |\partial_j\lambda(s^*)-\partial_j\lambda(s_2)|\,
|\partial_j\lambda(s_2)| \,|\lambda(s_2)|^{n-2} \,ds
+  \frac{n^2}{|x|^2} \int_{\Pi_{2\eps}} 
|\partial_j\lambda(s_2)|^2 |\lambda(s_2)|^{n-2} \,ds
\\ & \ll \frac{n^2\tilde\ell(|x|)}{|x|^{\alpha+1}} \int_{\Pi_{3\eps}}
|s|^{\alpha-1}\tilde\ell(1/|s|)|\lambda(s)|^{n-2} \,ds
+ \frac{n^2}{|x|^2} \int_{\Pi_{3\eps}}
|s|^{2(\alpha-1)}\tilde\ell(1/|s|)^2 |\lambda(s)|^{n-2} \,ds
\\ &  \ll
\frac{n}{a_n^d}\frac{\tilde\ell(|x|)}{|x|^\alpha}\Big(\frac{a_n}{|x|}+\frac{a_n^{2-\alpha}\tilde\ell(a_n)}{|x|^{2-\alpha}\tilde\ell(|x|)}\Big)
\ll \frac{n}{a_n^d}\frac{\tilde\ell(|x|)}{|x|^\alpha}.
\end{align*}
This is the only term that requires Lemma~\ref{lemma-eigv}(ii).
The terms $F_2,\dots,F_5$ require only the rougher estimates in Lemma~\ref{lemma-PQ} combined with Corollary~\ref{cor-eigvint} and 
we obtain
\[
F_2 \ll \frac{n}{a_n^d}\frac{\tilde\ell(|x|)}{|x|^\alpha},
\quad
F_3 ,\, F_4 \ll \frac{n}{a_n^d}\frac{1}{|x|^2}, \quad
F_5 \ll  \frac{1}{a_n^d}\frac{\tilde\ell(|x|)}{|x|^\alpha}.
\]
Finally, by Lemma~\ref{lemma-op},
$F_6 \ll \delta_1^n\frac{\tilde\ell(|x|)}{|x|^\alpha}$
which ends the proof.
\end{pfof}

\subsection{Gibbs-Markov maps}
\label{sec-GM}

Let $(\Lambda,\mu)$ be a probability space with an at most countable measurable partition $\{\Lambda_k\}$, and let $f:\Lambda\to \Lambda$ be an ergodic measure-preserving transformation.
Define $s(z,z')$ to be the least integer $n\ge0$ such that $f^nz$ and $f^nz'$ lie in distinct partition elements.
It is assumed that $s(z,z')=\infty$ if and only if $z=z'$;  then $d_\theta(z,z')=\theta^{s(z,z')}$ is a metric
for $\theta\in(0,1)$, 

Let $g=\frac{d\mu}{d\mu\circ f}:\Lambda\to\R$.
We say that $f$ is a {\em Gibbs-Markov map} if
\begin{itemize}

\parskip = -2pt
\item $f \Lambda_k$ is a union of partition elements and $f|_{\Lambda_k}:\Lambda_k\to f\Lambda_k$ is a measurable bijection for each $k\ge1$;
\item $\inf_k\mu(f\Lambda_k)>0$;
\item
There are constants $C>0$, $\theta\in(0,1)$ such that
$|\log g(z)-\log g(z')|\le Cd_\theta(z,z')$ for all $z,z'\in \Lambda_k$, $k\ge1$.
\end{itemize}
Standard references for Gibbs-Markov maps include~\cite{AD01,ADU93}.  

Given $\phi:\Lambda\to\R$, let
\[
D_k\phi=\sup_{z,z'\in\Lambda_k,\,z\neq z'}|\phi(z)-\phi(z')|/d_\theta(z,z'),\qquad |\phi|_\theta=\sup_{k\ge1}D_k\phi.
\]
We define the Banach space $\cF_\theta\subset L^\infty$ to consist of functions
$\phi:\Lambda\to\R$ such that $|\phi|_\theta<\infty$ with norm
$\|\phi\|_\theta=|\phi|_\infty+|\phi|_\theta<\infty$.
For $\phi:\Lambda\to\R^d$, define $|\phi|_\theta=\max_{j=1,\dots,d}|\phi_j|_\theta$.

\begin{prop} \label{prop-GM}
Assume $f$ is a mixing Gibbs-Markov map and
let $v:\Lambda\to\R^d$ with $\int_\Lambda |v|^2\,d\mu=\infty$ and $|v|_\theta<\infty$.
Fix $\alpha\in(0,1)\cup(1,2]$ and 
assume that $v$ satisfies (H1).  

Then conditions (H1)--(H3) are satisfied with Banach space $\cB=\cF_\theta$.
\end{prop}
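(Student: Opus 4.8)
The plan is to verify conditions (H1)--(H3) for the Gibbs-Markov setting with $\cB = \cF_\theta$. Condition (H1) is assumed outright, so the work is to establish (H2) and (H3). I would begin by recalling the standard facts about the transfer operator $R$ of a mixing Gibbs-Markov map acting on $\cF_\theta$: namely that $R$ is a bounded operator on $\cF_\theta$, that it has a spectral gap (by Doeblin--Fortet / Lasota--Yorke type inequalities together with mixing), and that the leading eigenvalue is $1$ with constant eigenfunction---this gives (H3) directly from the classical Gibbs-Markov theory in~\cite{AD01,ADU93}. For (H2), the key is to control $R(s)\phi = R(e^{is\cdot v}\phi)$ and its $s$-derivatives in the $\|\cdot\|_\theta$ norm. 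The explicit formula $(R\phi)(z) = \sum_{f y = z} g(y)\phi(y)$ gives $(R(s)\phi)(z) = \sum_{fy=z} g(y) e^{is\cdot v(y)}\phi(y)$, and one estimates $|R(s)\phi|_\infty$ and the Lipschitz seminorm $|R(s)\phi|_\theta$ using the Gibbs-Markov distortion bound on $g$ together with $|v|_\theta < \infty$; uniform boundedness $\|R(s)\|_\theta \le C$ for $s \in \Pi_\eps$ follows since $|e^{is\cdot v(y)}| = 1$ and the phase is Lipschitz with seminorm $\ll |s|\,|v|_\theta$.

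The heart of (H2) is the Hölder-type modulus of continuity estimates (ii) and (iii), and this is where the regular variation of $v$ enters. The difference $R(s+h)\phi - R(s)\phi = R\big((e^{ih\cdot v}-1)e^{is\cdot v}\phi\big)$, so I would reduce to estimating $\|R\big((e^{ih\cdot v}-1)\psi\big)\|_\theta$ for $\psi$ in a bounded set of $\cF_\theta$. The pointwise bound $|e^{ih\cdot v}-1| \le \min\{2, |h||v|\}$ combined with the tail bound $\mu(|v| > K) = K^{-\alpha}\ell(K)$ and Karamata's theorem---exactly the split into $\{|v| > K\}$ and $\{|v| \le K\}$ with $K \approx 1/|h|$ used in the proof of Lemma~\ref{lemma-chfacts}---gives the $L^1$-type control, and then one upgrades to the $\cF_\theta$ norm by also bounding the Lipschitz seminorm of $(e^{ih\cdot v}-1)\psi$ (which picks up the extra factor from the Lipschitz norm of $e^{ih\cdot v}$, again $\ll |h||v|_\theta$ locally, and is absorbed). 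For (iii), one differentiates: $\partial_j R(s)\phi = R(iv_j e^{is\cdot v}\phi)$, and the analogous split with the extra factor $|v_j| \le |v|$ produces the $|h|^{\alpha-1}\tilde\ell(1/|h|)$ rate for $\alpha \in (1,2]$ exactly as in Lemma~\ref{lemma-chfacts}(ii). The case $\alpha \in (1,2]$ for (H2)(ii) only needs the cruder Lipschitz-in-$s$ bound $\|R(s+h)-R(s)\| \le C|h|$, which follows from $\E|v| < \infty$ (a consequence of $\alpha > 1$).

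I expect the main obstacle to be the bookkeeping required to pass from $L^1$/sup-norm estimates to genuine $\cF_\theta$-norm estimates: the seminorm $|\cdot|_\theta$ of a product $\psi_1\psi_2$ is controlled by $|\psi_1|_\infty|\psi_2|_\theta + |\psi_1|_\theta|\psi_2|_\infty$, so one must check that multiplying by $e^{is\cdot v}$ or by $iv_j$ and then applying $R$ does not destroy the regular-variation rates---in particular that the unbounded observable $v$ appearing in $\partial_j R(s)$ interacts correctly with the Lipschitz seminorm, using that $R$ contracts the seminorm relative to the sup-norm (the Gibbs-Markov property gives $|R\psi|_\theta \ll \theta |\psi|_\theta + |\psi|_\infty$-type bounds). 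A secondary technical point is that the tail/Karamata computations in Lemma~\ref{lemma-chfacts} are stated for the scalar $|X|$, so one records that they apply verbatim to $|v|$ since (H1) specifies the distribution of $v$ exactly as in the i.i.d.\ case. Once these are in hand, (H2) and (H3) hold and the proposition follows; I would cite~\cite{AD01,ADU93} for the spectral-gap part of (H3) and otherwise the argument parallels the i.i.d.\ proofs of Lemmas~\ref{lemma-chfact1} and~\ref{lemma-chfacts} with $R(s)$ in place of $\Psi(s)$.
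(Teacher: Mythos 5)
Your overall strategy matches the paper's: (H1) is assumed, (H3) is quoted from the standard spectral theory of mixing Gibbs--Markov maps on $\cF_\theta$, and (H2) is reduced, via the explicit formulas $R(s)\phi=R(e^{is\cdot v}\phi)$ and $\partial_jR(s)\phi=R(iv_je^{is\cdot v}\phi)$, to the tail/Karamata computation already carried out in the proof of Lemma~\ref{lemma-chfacts}. That is exactly the route the paper takes, and your identification of the target rate $|h|^{\alpha-1}\tilde\ell(1/|h|)$ for (H2)(iii), as well as the cruder Lipschitz bound for (H2)(ii) when $\alpha>1$, is correct.

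There is, however, one genuine gap at the step you yourself flag as the main obstacle, and the tool you propose there would fail. You suggest handling the passage to the $\cF_\theta$-norm via a Lasota--Yorke type inequality $|R\psi|_\theta\ll\theta|\psi|_\theta+|\psi|_\infty$ applied to $\psi=v_je^{is\cdot v}(e^{ih\cdot v}-1)$. But $v$ is unbounded (indeed $\int|v|^2\,d\mu=\infty$), so both $|\psi|_\infty$ and the global seminorm $|\psi|_\theta=\sup_kD_k\psi$ are typically infinite: $|\psi|\le|h|\,|v|^2$ is not a useful bound, and $D_k\psi$ involves $\sup_{\Lambda_k}|v|$, which grows with $k$. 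The inequality the paper actually uses is the partition-localised bound
\[
\|R(\phi\psi)\|_\theta\ \ll\ \|\phi\|_\theta\sum_k\mu(\Lambda_k)\bigl(\sup_{\Lambda_k}|\psi|+D_k\psi\bigr),
\]
in which the local sups and local Lipschitz constants are weighted by $\mu(\Lambda_k)$. Combined with the observation that $\sup_{\Lambda_k}|v|\le\inf_{\Lambda_k}|v|+|v|_\theta$ (and the analogous bound for $\sup_{\Lambda_k}|v_j(e^{ih\cdot v}-1)|$), these weighted sums collapse to the integrals $\int_\Lambda|v|\,d\mu$ and $\int_\Lambda|v_j(e^{ih\cdot v}-1)|\,d\mu$, and only at that point does the Karamata estimate of Lemma~\ref{lemma-chfacts}(ii) enter. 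Without replacing your global $|\cdot|_\infty$/$|\cdot|_\theta$ bound by this $\mu(\Lambda_k)$-weighted version, the argument does not close; with it, the rest of your outline goes through as in the paper.
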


\begin{proof}
Condition (H1) is satisfied by assumption and condition~(H3) is well-known for mixing Gibbs-Markov maps~\cite{AD01,ADU93}.
It remains to verify that (H2) holds.
In fact, for any $M>0$ the conditions in (H2) hold for all
 $|s|\le M$, $|h|\le1$.
We verify this for (H2)(iii).
All the other calculations are simpler and hence omitted.

Now $(\partial_j R(s+h)-\partial_j R(s))\phi=iR(\phi\psi)$ where
$\psi=v_je^{is\cdot v}(e^{ih\cdot v}-1)$.
A standard calculation shows that 
\[
\|R(\phi\psi)\|_\theta
\ll \sumk\mu(\Lambda_k)
\big(\supk|\phi\psi|+D_k(\phi\psi)\big)
\le \|\phi\|_\theta \sumk\mu(\Lambda_k)
\big(2\,\supk|\psi|+D_k\psi\big).
\]
where $\supk=\sup_{\Lambda_k}$ and $\inf_k=\inf_{\Lambda_k}$.
Hence
\[
\|\partial_jR(s+h)-\partial_jR(s)\|_\theta\ll \sumk \mu(\Lambda_k)\big\{\supk|v_j(e^{ih\cdot v}-1)|+
D_k\big(ve^{is\cdot v}(e^{ih\cdot v}-1)\big)\big\}.
\]
Also,  $D_k e^{is\cdot v}\le |s||v|_\theta\ll|s|$, so
\(
\|\partial_jR(s+h)-\partial_jR(s)\|_\theta\ll S_1+S_2+S_3+S_4,
\)
where
\begin{align*}
S_1 & = \sumk \mu(\Lambda_k)\supk|v_j(e^{ih\cdot v}-1)|, \\[.75ex]
S_2 & = \sumk \mu(\Lambda_k)\supk|v_j(e^{ih\cdot v}-1)| D_k e^{is\cdot v} 
\ll |s|S_1\le MS_1, \\[.75ex]
S_3 & = \sumk \mu(\Lambda_k)\supk|v| D_k e^{ih\cdot v}
 \le |h|\sumk \mu(\Lambda_k)\supk|v|, \\[.75ex]
S_4 & = \sumk \mu(\Lambda_k)\supk|e^{ih\cdot v}-1| D_k v \ll    S_3.
\end{align*}

Next,
$\supk|v|- \infk|v|
\le |v|_\theta\ll1$.  Hence 
\[
\sumk \mu(\Lambda_k)\supk|v|
\ll \sumk \mu(\Lambda_k)(1+\infk|v|)
\le 1+\int_\Lambda|v|\,d\mu,
\]
and we obtain $S_3,\,S_4\ll |h|$.

Finally,
\[
\supk|v_j(e^{ih\cdot v}-1)|\ll
\big(1+\infk|v|\big)
\big(|h|+\infk|e^{ih\cdot v}-1|\big)\ll 
|h|\big(1+\infk|v|\big)+\infk|v_j(e^{ih\cdot v}-1)|,
\]
and so
\begin{align*}
S_1 & \ll  \sumk \mu(\Lambda_k) \big(|h|(1+\infk|v|)+\infk|v_j(e^{ih\cdot v}-1)|\big)
\\ &   \le  |h|\Big(1+\int_\Lambda|v|\,d\mu\Big)+
\int_\Lambda|v_j(e^{ih\cdot v}-1)|\,d\mu
  \ll  |h|+ \int_\Lambda|v_j(e^{ih\cdot v}-1)|\,d\mu.
\end{align*}
The conditions on $v$ are the same as those on $X$ in Theorem~\ref{thm-lldgen}, so
\[
\int_\Lambda|v_j(e^{ih\cdot v}-1)|\,d\mu=\E|X_j(e^{ih\cdot X}-1)|
\ll |h|^{\alpha-1}\tilde\ell(1/|h|)
\]
by the proof of Lemma~\ref{lemma-chfacts}(ii).
Hence 
$S_1,\,S_2\ll  |h|+ |h|^{\alpha-1}\tilde\ell(1/|h|)$.

Altogether,
$\|\partial_j R(s+h)-\partial_j R(s)\|_\theta\ll 
|h|+|h|^{\alpha-1}\tilde\ell(1/|h|) \ll
|h|^{\alpha-1}\tilde\ell(1/|h|)$ as required.~
\end{proof}

\subsection{AFU maps}
\label{sec-AFU}

Let $\Lambda=[0,1]$ with measurable partition $\{I\}$ consisting of open intervals.  A map
$f:\Lambda\to \Lambda$ is called AFU if $f|_I$ is $C^2$ and strictly monotone for each $I$, and
\begin{itemize}

\parskip = -2pt
\item[(A)] (Adler's condition)   $f''/(f')^2$ is bounded on $\bigcup I$.
\item[(F)] (finite images)  The set of images $\{fI\}$ is finite.
\item[(U)] (uniform expansion)  There exists $\rho>1$ such that
$|f'|\ge\rho$ on $\bigcup I$.
\end{itemize}
A standard reference for such maps 
is~\cite{Zweimuller98} (see also~\cite{ADSZ04}).  Since AFU maps are not necessarily Markov, the H\"older spaces $\cF$ are not preserved by the transfer operator of $f$ and it is standard to consider the space of bounded variation functions.
Accordingly, we define the Banach space $\cB = \BV\subset L^\infty$
to consist of functions $\phi:\Lambda\to\R$ such that
$\Var\phi<\infty$
with norm $\|\phi\|=|\phi|_\infty+\Var \phi$.
Here 
\[
\Var \phi=\sup_{0=z_0<\dots<z_k=1}\sum_{i=1}^k|\phi(z_i)-\phi(z_{i-1})|
\]
 denotes the variation of $\phi$ on $\Lambda$.
Also, we let $\Var_I\phi$ denote the variation of $\phi$ on $I$.
For $\phi:\Lambda\to\R^d$, define $\Var \phi=\max_{j=1,\dots,d}\Var\phi_j$.

We suppose that $f:\Lambda\to \Lambda$ is topologically mixing.
Then there is a unique absolutely continuous $f$-invariant probability measure $\mu$, and $\mu$ is mixing.

\begin{prop} \label{prop-AFU}
Assume $f$ is a topologically mixing AFU map and
let $v:\Lambda\to\R^d$ with $\int_\Lambda |v|^2\,d\mu=\infty$ and $\supI\Var_Iv<\infty$.
Fix $\alpha\in(0,1)\cup(1,2]$ and 
assume that the tails of $v$ satisfy (H1).  

Then conditions (H1)--(H3) are satisfied with Banach space $\cB=\BV$.
\end{prop}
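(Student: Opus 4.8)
The plan is to verify conditions (H1)--(H3) for AFU maps with the Banach space $\cB=\BV$, following the same structure as the proof of Proposition~\ref{prop-GM}. Condition (H1) holds by assumption. Condition (H3) -- quasicompactness of $R(0)$ on $\BV$ with a simple leading eigenvalue $1$ -- is classical for topologically mixing AFU maps and follows from~\cite{Zweimuller98} (Lasota--Yorke type inequality together with mixing). So the real work, as in the Gibbs-Markov case, is condition (H2), and I would again check only the hardest estimate (H2)(iii) for $\alpha\in(1,2]$, noting that (H2)(i) and (H2)(ii) follow by simpler versions of the same computation, and that it suffices to establish the bounds for all $|s|\le M$, $|h|\le 1$.

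The key analytic input is a distortion/variation estimate for the transfer operator of an AFU map: there is a constant $C>0$ such that for all $\phi:\Lambda\to\R$,
\[
\Var(R(\psi\phi))\ll \supI\Big(\Var_I(\psi\phi)+\tfrac{1}{\mu(I)}\textstyle\int_I|\psi\phi|\,d\mu\Big)
\ll \|\phi\|\,\supI\Big(\Var_I\psi + \sup_I|\psi|\Big),
\]
the analogue of the bound $\|R(\phi\psi)\|_\theta\ll\|\phi\|_\theta\sum_k\mu(\Lambda_k)(\sup_k|\psi|+D_k\psi)$ used for Gibbs--Markov maps; this is exactly where Adler's condition (A) and finite images (F) enter, guaranteeing bounded distortion and that the $1/\mu(I)$ weights are under control. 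With $\psi=v_je^{is\cdot v}(e^{ih\cdot v}-1)$, the term $(\partial_jR(s+h)-\partial_jR(s))\phi=iR(\psi\phi)$, and I would bound $\|\partial_jR(s+h)-\partial_jR(s)\|\ll\supI(\Var_I\psi+\sup_I|\psi|)$. Expanding $\Var_I\psi$ using $\Var_I(ab)\le \sup_I|a|\Var_Ib+\sup_I|b|\Var_Ia$ and $\Var_I e^{is\cdot v}\le|s|\Var_Iv\ll|s|$, $\Var_I(e^{ih\cdot v}-1)\le|h|\Var_Iv\ll|h|$, together with $\sup_I|v|\le\inf_I|v|+\Var_Iv\ll 1+\inf_I|v|$ and $\sup_I|e^{ih\cdot v}-1|\le|h|(1+\inf_I|v|)+\inf_I|e^{ih\cdot v}-1|$, reduces everything to controlling $\supI\big(|h|(1+\inf_I|v|)+\inf_I|v_j(e^{ih\cdot v}-1)|\big)$ plus lower-order terms. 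Since $\mu$ is absolutely continuous with bounded density (another consequence of the AFU hypotheses), sums of $\mu(I)\sup_I|v|$ are dominated by $1+\int_\Lambda|v|\,d\mu<\infty$ and $\mu(I)\inf_I|v_j(e^{ih\cdot v}-1)|$ by $\int_\Lambda|v_j(e^{ih\cdot v}-1)|\,d\mu$; the latter is $\ll|h|^{\alpha-1}\tilde\ell(1/|h|)$ by the proof of Lemma~\ref{lemma-chfacts}(ii), since the tails of $v$ satisfy the same hypothesis~\eqref{eq-rvbt} as $X$.

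The main obstacle is the first displayed variation inequality: unlike for Gibbs--Markov maps where the partition is Markov and $R$ preserves the Hölder space with a clean per-element formula, for AFU maps $R$ does not preserve Hölder spaces and one must work in $\BV$, where the transfer operator applied to a product must be controlled element-by-element using the $C^2$ regularity of $f|_I$, Adler's condition to bound $|(1/|f'|)'|$, and finiteness of images to keep the number of "boundary" contributions bounded. Once this inequality is in hand (it is standard, e.g.\ in~\cite{Zweimuller98,ADSZ04}, so I would cite it rather than reprove it), the remaining steps are formally identical to the Gibbs--Markov computation above, and I would present them compactly, flagging that the estimates for (H2)(i),(ii) are simpler and omitted. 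Collecting the bounds gives $\|\partial_jR(s+h)-\partial_jR(s)\|\ll|h|+|h|^{\alpha-1}\tilde\ell(1/|h|)\ll|h|^{\alpha-1}\tilde\ell(1/|h|)$, which is (H2)(iii), completing the verification.
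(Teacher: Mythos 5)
Your overall route is exactly the paper's: (H1) by assumption, (H3) quoted as classical for mixing AFU maps, and the work concentrated on (H2)(iii) via the identity $(\partial_jR(s+h)-\partial_jR(s))\phi=iR(\phi\psi)$ with $\psi=v_je^{is\cdot v}(e^{ih\cdot v}-1)$, followed by the same $S_1,\dots,S_4$ splitting and the reduction of $\inf_I$-terms to integrals so that Lemma~\ref{lemma-chfacts}(ii) applies. However, your key displayed inequality is misstated in a way that breaks the chain of estimates. The correct per-branch bound (the one the paper uses, coming from $\sup_Ig\ll\mu(I)$ via (F) and $\Var_Ig\ll\mu(I)$ via (A)) is
\[
\|R(\phi\psi)\|\;\ll\;\|\phi\|\,\sum_I\mu(I)\bigl(\sup\nolimits_I|\psi|+\Var_I\psi\bigr),
\]
whereas you bound this further by $\|\phi\|\sup_I(\Var_I\psi+\sup_I|\psi|)$. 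That last inequality is true but vacuous here: since $\int_\Lambda|v|^2\,d\mu=\infty$, $v$ is unbounded, so $\sup_I\sup_I|v_j(e^{ih\cdot v}-1)|=\infty$ in general (think of the partition intervals accumulating where $|v|$ blows up). The whole point of retaining the weights $\mu(I)$ is that they let you convert $\sum_I\mu(I)\inf_I|v_j(e^{ih\cdot v}-1)|$ into $\int_\Lambda|v_j(e^{ih\cdot v}-1)|\,d\mu$ and $\sum_I\mu(I)\inf_I|v|$ into $\int_\Lambda|v|\,d\mu$; a supremum over branches admits no such conversion. Your final paragraph silently reverts to the $\mu(I)$-weighted sums, so the intended argument is the right one, but as written the displayed bound does not imply the steps that follow it. Replace the supremum by the weighted sum over branches and the proof matches the paper's.

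One minor point: you do not need boundedness of the invariant density as a separate ingredient if you phrase the transfer operator with respect to $\mu$ using $g=d\mu/(d\mu\circ f)$; conditions (A), (F), (U) then give the distortion and the $\mu(I)$ weights directly, which is how the paper's ``standard calculation'' proceeds.
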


\begin{proof}
The proof essentially goes word for word as the proof of Proposition~\ref{prop-GM} with minor changes.
Condition (H1) is satisfied by assumption and condition~(H3) is well-known for mixing AFU maps.
It remains to verify that (H2) holds.
Fix $M>0$.  
As before, we verify (H2)(iii)
 for all $|s|\le M$, $|h|\le 1$;
the other calculations being simpler.

Again, $(\partial_j R(s+h)-\partial_j R(s))\phi=iR(\phi\psi)$ where
$\psi=v_je^{is\cdot v}(e^{ih\cdot v}-1)$, and
a standard calculation shows that
\begin{align*}
\|R(\phi\psi)\| \ll \|\phi\| \sumI \mu(I)(\supI |\psi|+\Var_I\psi).
\end{align*}
Also,
$\Var_I e^{is\cdot v}\le |s|\Var_I v\ll |s|$,
so $\|R'(s+h)-R'(s)\|\le S_1+S_2+S_3+S_4$ where
\begin{align*}
S_1 & =\sumI \mu(I)\supI|v_j(e^{ih\cdot v}-1)| 
\\
S_2 & =\sumI \mu(I)\supI|v_j(e^{ih\cdot v}-1)|\Var_I e^{is\cdot v} 
\ll |s|S_1\le MS_1,
\\
S_3 & = \sumI \mu(I)\supI|v| \Var_I e^{ih\cdot v}
 \le |h|\sumI \mu(I)\supI|v|, \\[.75ex]
S_4 & = \sumI \mu(I)\supI|e^{ih\cdot v}-1| \Var_I v \ll    S_3.
\end{align*}
The calculation continues exactly as in Proposition~\ref{prop-GM} and we omit the remaining details.
\end{proof}

\paragraph{Acknowledgements}
DT was partially supported by EPSRC grant EP/S019286/1.
We are grateful to the referees for several helpful comments and suggestions.

\end{document}